\author{Michael K. Brown}
\author{Prashanth Sridhar}
\newcommand{\Addresses}{{
	\vskip\baselineskip
  	\footnotesize
    \noindent \textsc{Department of Mathematics, University of Alabama}
    \par\nopagebreak
    
    \medskip
    \noindent \textsc{Department of Mathematics and Statistics, Auburn University} \par\nopagebreak

    \medskip
	\noindent \textit{E-mail addresses:} \texttt{mkb0096@auburn.edu, psridhar1@ua.edu}
 }}
\numberwithin{equation}{section}
\newtheorem{lem}[equation]{Lemma}
\newtheorem{prop}[equation]{Proposition}
\newtheorem{cor}[equation]{Corollary}
\newtheorem{conj}[equation]{Conjecture}
\newtheorem{claim*}{Claim}
\newtheorem{thm}[equation]{Theorem}
\theoremstyle{definition}
\newtheorem{defn}[equation]{Definition}
\newtheorem{dfn}[equation]{Definition}
\newtheorem{example}[equation]{Example}
\newtheorem{ex}[equation]{Example}
\newtheorem{setup}[equation]{Setup}
\newtheorem{nota}[equation]{Notation}
\theoremstyle{remark}
\newtheorem{remark}[equation]{Remark}
\newtheorem{remarks}[equation]{Remarks}
\newcommand{\mfrak}[1]{\mathfrak{#1}}
\renewcommand{\k}{\Bbbk}
\renewcommand{\a}{\alpha}
\newcommand{\m}{\mfrak{m}}
\newcommand{\n}{\mfrak{n}}
\newcommand{\Hom}{\operatorname{Hom}}
\newcommand{\RHom}{\mathbf{R}\underline{\Hom}}
\newcommand{\D}{\msf{D}}
\newcommand{\kk}{\mathbf{k}}
\newcommand{\del}{\partial}
\newcommand{\Mod}{\operatorname{Mod}}
\newcommand{\id}{\operatorname{id}}
\newcommand{\gr}{\operatorname{gr}}
\def\lhom{\operatorname{\underline{Hom}}}
\def\lext{\operatorname{\underline{Ext}}}
\newcommand{\dGamma}{\mathbf{R}\Gamma}
\def\nc{\newcommand}
\nc{\on}{\operatorname}
\nc{\bideg}{\on{bideg}}
\nc{\xra}{\xrightarrow}
\def\phi{\varphi}
\nc\cB{\mathcal{B}}
\def\th{\on{th}}
\def\D{\on{D}}
\def\Db{\D^{\on{b}}}
\def\Df{\D^{\on{f}}}
\nc{\into}{\hookrightarrow}
\nc{\onto}{\twoheadrightarrow}
\nc{\LL}{\mathbf{L}}
\nc{\RR}{\mathbf{R}}
\nc{\Perf}{\on{Perf}}
\nc{\nat}{\natural}
\nc{\tors}{\on{tors}}
\nc{\Tors}{\on{Tors}}
\def\Mod{\on{Mod}}
\nc{\qgr}{\on{qgr}}
\nc{\Qgr}{\on{Qgr}}
\nc{\fQgr}{\on{Qgr}^{\on{f}}}
\nc{\colim}{\on{colim}}
\def\Z{\mathbb{Z}}
\nc{\Ext}{\on{Ext}}
\def\Dsing{\D_{\on{sg}}}
\nc{\om}{\omega}
\nc{\w}{\widetilde}
\nc{\PP}{\mathbb{P}}
\nc{\mf}{\on{mf}}
\nc{\OO}{\mathcal{O}}
\nc{\Proj}{\on{Proj}}
\nc{\Qcoh}{\on{Qcoh}}
\nc{\coh}{\on{coh}}
\nc{\Tor}{\on{Tor}}
\nc{\Modf}{\Mod^{\on{f}}}
\def\op{\on{op}}
\nc{\ce}{\coloneqq}
\def\c{\colon}
\def\k{\kk}
\nc{\Com}{\on{Com}}
\nc{\A}{\mathcal{A}}
\nc{\B}{\mathcal{B}}
\nc{\C}{\mathcal{C}}
\nc{\Sh}{\on{Sh}}
\nc{\QCoh}{\on{QCoh}}
\nc{\Coh}{\on{Coh}}
\nc{\fQCoh}{\QCoh^{\on{f}}}
\nc{\ov}{\overline}
\nc{\End}{\on{\underline{End}}}
\def\MR#1{}
\nc{\Qgrf}{\Qgr^{\on{f}}}
\nc{\uHom}{\underline{\Hom}}
\nc{\Inj}{\mathrm{Inj}}
\nc{\proj}{\mathrm{Proj}}
\nc{\spec}{\mathrm{Spec}}
\nc{\Dqgr}{\D_{\qgr}}
\nc{\co}{\colon}
\nc{\cI}{\mathcal{I}}
\nc{\tM}{\widetilde{M}}
\nc{\uExt}{\underline{\Ext}}
\begin{document}
\title{Serre duality for dg-algebras}
\thanks{The first author was partially supported by NSF grant DMS-2302373.}
\begin{abstract}
We generalize Yekutieli-Zhang's noncommutative Serre Duality Theorem to the setting of noncommutative spaces associated to dg-algebras. As an application, we establish some finiteness properties of derived global sections over such noncommutative spaces. Along the way, we generalize Yekutieli's notion of a balanced dualizing complex to the setting of dg-algebras and establish some cases in which they exist.
\end{abstract}

\thanks{{\em Mathematics Subject Classification} 2020: 14F08}

\numberwithin{equation}{section}

\maketitle
\setcounter{tocdepth}{1}

\setcounter{section}{0}

\section{Introduction}

Let $\k$ be a field and $X$ a smooth and proper $\k$-variety of dimension $n$. Given a vector bundle $\mathcal{V}$ on $X$, the classical Serre Duality Theorem states that there are isomorphisms
\begin{equation}
\label{eqn:serre}
H^j(X, \mathcal{V}) \cong \Ext^{n - j}(\mathcal{V}, \om_X)^*
\end{equation}
for all $0 \le j \le n$, where $\om_X$ denotes the canonical bundle of $X$, and the superscript $*$ denotes the $\k$-dual. Serre duality has been generalized in many different directions. For instance, Grothendieck-Verdier duality extends the statement of Serre duality to proper morphisms of varieties~\cite[Theorem 3.3]{residues}, and Bondal-Kapranov's notion of a Serre functor on a triangulated category~\cite{BK} has led to the discovery of analogues of Serre duality for a host of categories arising in algebraic geometry, commutative algebra, and representation theory. We refer the reader to \cite{RV} for many examples of derived categories with Serre functors; we refer also to \cite{murfet} for 
a discussion of the Serre functor on the singularity category of a commutative Gorenstein local ring with isolated singularity (see also \cite{auslander, buchweitz}).

We focus in this paper on an extension of Serre duality to graded noncommutative algebras due to Yekutieli-Zhang~\cite{Yekutieli-Zhang}. Let $A = \bigoplus_{ i \ge 0} A_i$ be a graded Noetherian algebra such that $A_0 = \k$. Denote by $\D_{\Qgr}(A)$ the quotient of the derived category $\D(A)$ of graded right $A$-modules by its subcategory $\D^{\on{Tors}}(A)$ of complexes with torsion cohomology, i.e. complexes such that each cohomology class is annihilated by some power of the homogeneous maximal ideal. Given an object $M \in \D(A)$, we denote its image in $\D_{\Qgr}(A)$ by $\widetilde{M}$. When $A$ is commutative and generated in degree 1, the category $\D_{\Qgr}(A)$ is equivalent to the derived category of quasicoherent sheaves on the projective scheme $X = \Proj(A)$. In general, no such scheme $X$ exists, but the intuition underlying the theory of noncommutative projective schemes, pioneered by Artin-Zhang~\cite{AZ}, is that  $\D_{\Qgr}(A)$ nevertheless shares many features of the derived category of a projective scheme. One such feature we consider in this paper is a noncommutative analogue of the derived global sections functor~\cite[Section 7]{AZ}, denoted $\RR\Gamma_* \co \D_{\Qgr}(A) \to \D(A)$. Given $j \in \Z$ and $\widetilde{M}, \widetilde{N} \in \D_{\Qgr}(A)$, let $\RR^j\Gamma(\widetilde{M})$ denote the internal degree 0 part of $H^j\RR\Gamma_*(\widetilde{M})$, and let $\Ext^j(\widetilde{M}, \widetilde{N}) \ce \Hom_{\D_{\Qgr}(A)}(\widetilde{M}, \widetilde{N}[j])$. Yekutieli-Zhang proved the following noncommutative version of Serre duality:

\begin{thm}[\cite{Yekutieli-Zhang} Theorem 4.2(2)]
\label{thm:intro1}
Let $A$ be as above, and assume $A$ admits a balanced dualizing complex $R$, in the sense of \cite[Definitions 3.3 and 4.1]{YEKUTIELI1_dualizing}. Given $j \in \Z$ and an object $M$ in the bounded derived category $\Db(A)$ of graded $A$-modules, there is an isomorphism 
$$
\RR^j\Gamma(\widetilde{M}) \cong \Ext^{-j - 1}(\widetilde{M}, \widetilde{R})^*,
$$
where the superscript $*$ denotes the $\k$-dual. 
\end{thm}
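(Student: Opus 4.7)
The plan is to combine local duality for the balanced dualizing complex $R$ with the adjunction $\pi\dashv\RR\Gamma_*$ (where $\pi\colon\D(A)\to\D_{\Qgr}(A)$ is the quotient functor) and the standard distinguished triangle
$$\RR\Gamma_\m(N) \to N \to \RR\Gamma_*(\widetilde N) \to \RR\Gamma_\m(N)[1]$$
in $\D(A)$, where $\RR\Gamma_\m$ is the derived $\m$-torsion functor. The balanced hypothesis gives $\RR\Gamma_\m(R)\cong A^*$, and Yekutieli's local duality theorem promotes this to a functorial isomorphism $\RR\Gamma_\m(N)\cong \RHom_A(N,R)^*$ for $N\in\Db(A)$, where $(-)^*$ denotes the graded $\k$-dual; this is the key external input.

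I would first apply the displayed triangle with $N=R$ to obtain a triangle $A^*\to R\to\RR\Gamma_*(\widetilde R)\to A^*[1]$ in $\D(A)$, and then apply $\RHom_A(M,-)$. The first term becomes $M^*$ by a direct Hom-tensor calculation (using that $A^*=\Hom_\k(A,\k)$ is injective in graded $A$-modules), while the second becomes $\RR\Gamma_\m(M)^*$ by local duality. Dualizing the resulting triangle and applying double-dual isomorphisms---valid because $A$ is graded Noetherian with $A_0=\k$, so all relevant graded pieces are finite-dimensional---yields a triangle whose outer two terms agree, up to a rotation, with those of the initial triangle applied to $M$. Comparison of third terms then produces
$$\RR\Gamma_*(\widetilde M)\cong \RHom_A\bigl(M,\RR\Gamma_*(\widetilde R)\bigr)^*[1]$$
in $\D(A)$. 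Taking $H^j$ and extracting the internal degree $0$ piece gives $\RR^j\Gamma(\widetilde M)\cong \bigl(\Ext^{-j-1}_A(M,\RR\Gamma_*(\widetilde R))_0\bigr)^*$, and a final application of the adjunction $\pi\dashv\RR\Gamma_*$ at the derived level rewrites the inner $\Ext$ as $\Hom_{\D_{\Qgr}(A)}(\widetilde M,\widetilde R[-j-1])=\Ext^{-j-1}(\widetilde M,\widetilde R)$, yielding the theorem.

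The principal technical input is the local duality theorem, which is what upgrades the static balanced condition on $R$ into a functorial identification usable for arbitrary $N\in\Db(A)$; the remaining steps are largely formal manipulation of triangles, adjunctions, and graded duals. The most delicate point is verifying that the comparison of the dualized triangle with the original triangle applied to $M$ is natural, so that the induced isomorphism on third terms genuinely identifies $\RR\Gamma_*(\widetilde M)[-1]$ with $\RHom_A(M,\RR\Gamma_*(\widetilde R))^*$ rather than merely showing the two fit into abstractly isomorphic triangles; this follows from the naturality built into local duality and the Hom-tensor isomorphism.
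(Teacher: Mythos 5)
Your proof is correct in outline but takes a genuinely different route from the paper's, and it is worth comparing them. Both rest on the same key input: local duality, which upgrades the balanced condition $\RR\Gamma_\m(R)\cong A^*$ to the functorial identification $\RR\Gamma_\m(M)\cong\RHom_A(M,R)^*$. You then work at the level of $\D(A)$: dualize the $\RHom_A(M,-)$ of the torsion triangle for $R$, apply double duality (valid by finite-dimensionality of graded pieces), and compare against the torsion triangle for $M$, deducing $\RR\Gamma_*(\widetilde M)\cong\RHom_A(M,\RR\Gamma_*(\widetilde R))^*[1]$ and then passing through the adjunction $\pi\dashv\RR\Gamma_*$. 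The paper instead works at the cohomology level: for $d\geq 1$ the outer terms $H^\bullet(M_{\geq d})_0$ in the long exact sequence attached to the torsion triangle for $M_{\geq d}$ vanish, giving directly $\RR^j\Gamma(\widetilde M)\cong H^{j+1}_\m(M_{\geq d})_0$; local duality and adjunction then convert this to $\Hom_\k(\Ext^{-j-1}_A(M_{\geq d},R),\k)$, and the theorem follows by passing to the colimit over $d$ and invoking the colimit description $\Ext^j(\widetilde M,\widetilde N)\cong\operatorname{colim}_d\Ext^j_A(M_{\geq d},N)$.

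The trade-off is significant. Your approach is conceptually slicker and yields an isomorphism in $\D(A)$ rather than only on cohomology, and it avoids introducing the truncations $M_{\geq d}$. But your "most delicate point"---that the map $\RR\Gamma_\m(M)\to M$ emerging after dualization and double duality is the canonical counit---is not a formal consequence of "the naturality built into local duality"; cones are not functorial, and invoking the five lemma in a triangulated category to compare third terms requires that you actually have a morphism of triangles, which means exhibiting a commuting square between two \emph{specific} maps $\RR\Gamma_\m(M)\to M$. Verifying that means tracing the construction of the local duality isomorphism (built in the paper as a colimit over $A/A_{\geq n}$) against the counit (built from the same system), a real if mild chore that you hand-wave. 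The paper's route, while requiring the auxiliary truncations, sidesteps this cone comparison entirely: once the outer terms of the long exact sequence vanish, the middle isomorphism is forced at the level of vector spaces, and no compatibility check between two maps in $\D(A)$ is needed. So your proof would need that compatibility carried out explicitly to be complete, whereas the paper's is already airtight as written.
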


For instance, it follows from~\cite[Section 4]{YEKUTIELI1_dualizing} that $A$ admits a balanced dualizing complex when it is AS-Gorenstein; see~\cite[Corollary 5.6 and Theorem 7.3]{YEKUTIELI1_dualizing} for additional examples. If $A$ is AS-Gorenstein, then $R$ is isomorphic as a right $A$-module to $A(-a)[n]$, where $n$ is the injective dimension of $A$, and $a \in \Z$ is  such that there is an isomorphism $\RHom_A(\kk, A) \simeq \kk(a)[-n]$ in $\D(A)$.   Theorem~\ref{thm:intro1} therefore implies $
\RR^j\Gamma(\widetilde{M}) \cong \Ext^{n-1 - j }(\widetilde{M}, \widetilde{A(-a)})^*$, which extends the Serre duality isomorphism~\eqref{eqn:serre} in the case of an arithmetically Gorenstein projective variety.  

\medskip
Suppose now that $A$ is a differential bigraded $\k$-algebra (Definition~\ref{def:dga})---which we abbreviate in this paper to ``dg-algebra"---and assume $A$ satisfies the conditions in Setup~\ref{setup} below. The authors observe in their previous paper \cite{brown2023orlovs} that much of Artin-Zhang's theory of noncommutative projective schemes generalizes to such dg-algebras. In particular, letting $\D(A)$ denote the derived category of differential bigraded $A$-modules (see Section~\ref{sec:prelim}), and defining $\D_{\Qgr}(A)$ as the quotient of $\D(A)$ by objects whose cohomology is annihilated by some power of the homogeneous maximal ideal of $H^0(A)$, we have a derived global sections functor
$
\RR\Gamma_* \co \D_{\Qgr}(A) \to \D(A);
$
see \cite[Section 3.1]{brown2023orlovs} or Section~\ref{sec:qgr} below. As above, we denote the image of $M \in \D(A)$ in $\D_{\Qgr}(A)$ by $\widetilde{M}$. Given $\widetilde{M}, \widetilde{N} \in \D_{\Qgr}(A)$ and $j \in \Z$, we define $\RR^j\Gamma(\widetilde{M})$ and $\Ext^j(\widetilde{M}, \widetilde{N})$ just as before. Our main result is a generalization of Yekutieli-Zhang's noncommutative Serre Duality Theorem (Theorem~\ref{thm:intro1}) to dg-algebras:

\begin{thm}
\label{thm: Serre_duality}
Let $A$ be a differential bigraded $\k$-algebra as in Setup~\ref{setup}, and assume $A$ admits a balanced dualizing dg-module $R$ (Definition~\ref{def:balanced_dualizing}). Given $j\in \mathbb{Z}$ and an object $M$ in the bounded derived category $\Db(A)$ of differential bigraded $A$-modules (see Section~\ref{sec:derived}), there is an isomorphism
    \[\RR^j\Gamma(\tM)\cong \Ext^{-j-1}(\tM,\widetilde{R})^*,\]
    where the superscript $*$ denotes the $\k$-dual.
\end{thm}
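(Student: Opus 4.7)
The plan is to adapt the strategy of Yekutieli--Zhang's proof of Theorem~\ref{thm:intro1} to the dg-algebra setting. Two main inputs are required. The first is the defining distinguished triangle
$$\RR\Gamma_\m(M) \to M \to \RR\Gamma_*(\tM) \to \RR\Gamma_\m(M)[1]$$
in $\D(A)$ relating local cohomology to derived global sections, which I expect to follow from the $\Qgr$-formalism developed in Section~\ref{sec:qgr}. The second is a local duality identification
$$\RR\Gamma_\m(M) \simeq \RHom_A(M, R)^*,$$
where $(-)^*$ denotes the internally graded $\k$-dual; this is essentially the defining feature of a balanced dualizing dg-module, generalizing its classical graded-algebra formulation, and I expect it to appear as a lemma preceding the theorem.

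Taking the internal degree $0$ part of the $(j{+}1)$st cohomology of the local duality isomorphism yields
$$H^{j+1}\RR\Gamma_\m(M)_0 \simeq \Ext_A^{-j-1}(M, R)_0^*.$$
Running the same procedure with $M$ replaced by $R$ extracts from the balanced condition a triangle $A^* \to R \to \RR\Gamma_*(\widetilde{R}) \to A^*[1]$; applying $\RHom_A(M, -)$ and invoking the adjunction $\Hom_{\D_{\Qgr}(A)}(\tM, \widetilde{R}[i]) \cong H^i\RHom_A(M, \RR\Gamma_*\widetilde{R})_0$ produces a second long exact sequence. Using the identification $\Hom_A(M, A^*)_0 \cong H^0(M)_0^*$, one sees that after $\k$-dualization and the substitution $i = -j-1$ this sequence matches term-for-term the sequence coming from the first triangle. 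The five lemma---applied to a natural morphism of long exact sequences induced by the pairing intrinsic to $R$---then produces the desired isomorphism $\RR^j\Gamma(\tM) \cong \Ext^{-j-1}(\tM, \widetilde{R})^*$.

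The hard part is developing the foundational tools in the dg setting: in particular, proving local duality for bounded dg-modules, establishing the adjunction that computes $\Ext_{\Qgr}$ via $\RR\Gamma_*$, and supplying enough finiteness so that Matlis duality behaves on all relevant $\k$-vector spaces---which is precisely what the balanced hypothesis is designed to guarantee. Constructing the comparison morphism between the two long exact sequences naturally---so that the five lemma is genuinely applicable---requires additional care in the dg setting owing to the sign and bigrading bookkeeping, but once the input lemmas are in place, the structure of the comparison is forced by the Yoneda pairing, and the main argument reduces to a formal diagram chase.
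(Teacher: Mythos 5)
Your choice of inputs is correct: the adjunction triangle $\RR\Gamma_\m(M) \to M \to \RR\Gamma_*(\tM) \to$ and the local duality identification $\RR\Gamma_\m(M) \cong \RHom_A(M,R)^*$ (which, with the balanced hypothesis, is exactly Corollary~\ref{rem:LD} in the paper together with $\uHom_\k$--$\otimes$ adjunction) are precisely the two ingredients the paper uses. Where you diverge is in how you assemble them. You propose to produce two long exact sequences---one from applying $H^\bullet(-)_0$ to the adjunction triangle for $M$, the other from applying $\RHom_A(M,-)$ to the adjunction triangle for $R$ and then $H^\bullet(-)_0$---and to match them up term by term after $\k$-dualization, invoking the five lemma. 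The paper avoids this entirely by a truncation trick: since $\widetilde{M_{\ge d}} = \tM$ for every $d$, one may run the argument with $M_{\ge d}$ in place of $M$ for any $d \ge 1$; then $H^j(M_{\ge d})_0 = 0$ for all $j$ because $M_{\ge d}$ is concentrated in internal degrees at least $d \ge 1$, so the two outer terms of the single long exact sequence vanish in internal degree $0$, yielding the \emph{direct} isomorphism $\RR^j\Gamma(\tM) \cong H^{j+1}_\m(M_{\ge d})_0$. Local duality then immediately gives $H^{j+1}_\m(M_{\ge d})_0 \cong \Hom_\k(\Ext_A^{-j-1}(M_{\ge d},R),\k)$, and passing to the colimit over $d$ (using $\Ext^j(\tM,\tilde N) \cong \colim_d \Ext_A^j(M_{\ge d},N)$, which is established earlier in Section~\ref{sec:serreduality}) finishes the proof. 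The truncation trick buys a purely degenerate long exact sequence, so there is no diagram chase at all.

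The genuine gap in your proposal is the comparison morphism needed to make the five lemma applicable. You assert that ``the structure of the comparison is forced by the Yoneda pairing,'' but this is exactly the content that needs to be supplied: one must check that the local duality isomorphism of Theorem~\ref{thm: local_duality} is compatible (in the sense of commuting squares) with the unit/counit maps appearing in both adjunction triangles, and the construction of that isomorphism (via colimits of the maps $f_n = \beta_n \circ \gamma_n$) does not make this commutativity evident. Without it, matching the sequences ``term for term'' only shows their graded pieces are abstractly isomorphic, which is not enough to conclude anything about the middle terms. Additionally, your identification $(H^{-j}(M^*)_0)^* \cong H^j(M)_0$ in the dualized sequence relies on finite-dimensionality of $H^j(M)_0$; this does hold (Remark~\ref{rem:easy}(2)), but it is a hypothesis you should cite explicitly rather than attribute to the balanced condition, which is not where that finiteness comes from.
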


Our notion of a balanced dualizing dg-module is a generalization of \cite[Definition 4.1]{YEKUTIELI1_dualizing}. We prove in  Theorem~\ref{prop:balanced} that any dg-algebra $A$ as in Setup~\ref{setup} that is Gorenstein (Definition~\ref{def:gorenstein}) and graded commutative admits a balanced dualizing dg-module; we refer the reader to \cite[Examples 2.23---2.26]{brown2023orlovs} for several families of examples of such dg-algebras. For instance, Koszul complexes on sequences of homogeneous forms in graded Gorenstein rings admit balanced dualizing dg-modules. We also prove in Proposition~\ref{lem:fd_balanced} that if $A$ is a dg-algebra as in Setup~\ref{setup}, and $\dim_\kk H(A) < \infty$, then $A$ admits a balanced dualizing dg-module.  

The existence of balanced dualizing dg-modules for graded commutative Gorenstein dg-algebras (Theorem~\ref{prop:balanced}) is the most technically challenging result in this paper; it is an application of the theory of minimal injective resolutions of dg-modules recently developed by Minamoto~\cite{Minamoto2, minamoto}. See Remark~\ref{rem:comm} for why the graded commutativity assumption is necessary for our proof of Theorem~\ref{prop:balanced}.  We conjecture that the graded commutativity assumption may be removed from Theorem~\ref{prop:balanced}: see Conjecture~\ref{conj:balanced}.

As an application of our Serre duality result (Theorem~\ref{thm: Serre_duality}), we prove that, if $A$ is a Gorenstein dg-algebra that admits a balanced dualizing dg-module, and $M \in \Db(A)$, then the truncation $\RR\Gamma(\widetilde{M})_{\ge i}$ of the derived global sections of $\widetilde{M}$ in internal degrees at least $i$ is an object in $\Db(A)$. That is, $\RR\Gamma(\widetilde{M})_{\ge i}$ has bounded cohomology, and each $\RR^j\Gamma(\widetilde{M})_{\ge i}$ is finitely generated over $H^0(A)$: see Theorem~\ref{thm:cd} for the precise statement. As a consequence, we extend the authors' dg-algebra version of Orlov's Landau-Ginzburg/Calabi-Yau correspondence~\cite[Theorem 1.3]{brown2023orlovs} to the case where $A$ is Gorenstein and admits a balanced dualizing dg-module: see Theorem~\ref{thm:main}. We emphasize that a proof of Conjecture~\ref{conj:balanced} would allow one to remove from Theorems~\ref{thm:cd} and~\ref{thm:main} the assumption that $A$ admits a balanced dualizing dg-module.

\subsection*{Overview of the paper.} We recall in Section~\ref{sec:prelim} some background on differential bigraded algebras and their derived categories. In Section 3, we introduce a notion of local cohomology for dg-algebras $A$ as in Setup~\ref{setup}, as well as our notion of a balanced dualizing dg-module. We prove a local duality result for dg-$A$-modules (Theorem~\ref{thm: local_duality}), and we show that balanced dualizing dg-modules exist in some cases (Proposition~\ref{lem:fd_balanced} and Theorem~\ref{prop:balanced}). We prove Serre duality for dg-algebras  (Theorem~\ref{thm: Serre_duality}) in Section~\ref{sec:serreduality}. Finally, in Section~\ref{sec:app}, we discuss an application of Theorem~\ref{thm: Serre_duality} to finiteness properties of derived global sections (Theorem~\ref{thm:cd}), as well as an extension of the dg-algebra version of Orlov's Landau-Ginzburg/Calabi-Yau correspondence \cite[Theorem 1.3]{brown2023orlovs} (see Theorem~\ref{thm:main}). 

\subsection*{Acknowledgments} We thank the anonymous referee for their careful reading and helpful suggestions. 
\section{Preliminaries} 
\label{sec:prelim}
We begin with some notation and conventions. Throughout, $\k$ denotes a field. We will consider bigraded $\kk$-vector spaces $V = \bigoplus_{i, j \in \Z} V_i^j$, where the superscript typically denotes a cohomological grading, and the subscript refers to an internal grading. 
Given $v \in V^j_i$, we write $\deg(v) = i$ and $|v| = j$. 
We will denote the $m^{\th}$ shift of $V$ in internal (resp. cohomological) degree by $V(m)$ (resp. $V[m]$). That is, $V(m)_i^j = V_{i + m}^j$, and $V[m]_i^j = V_i^{j + m}$. 

\subsection{Differential bigraded algebras and modules}
\label{sec:dba}

\begin{dfn}
\label{def:dga}
A \emph{differential bigraded $\kk$-algebra} is a bigraded $\kk$-algebra $A = \bigoplus_{(i, j) \in \Z^2} A_i^j$ equipped with a degree $(0, 1)$
$\kk$-linear map $\del_A$ that squares to 0 and satisfies the Leibniz rule:
$$
\del_A(xy) = \del_A(x)y + (-1)^{|x|}x\del_A(y).
$$
\end{dfn}

In this paper, we will call differential bigraded $\k$-algebras \emph{dg-algebras}, for short. We say a dg-algebra $A$ is \emph{graded commutative} if $xy= (-1)^{|x||y|}yx$ for all homogeneous $x,y \in A$. 
We will simply call $A$ \emph{commutative} if it is commutative in the usual sense, i.e. $xy = yx$ for all $x, y \in A$.

Let $A$ be a dg-algebra. We denote the underlying bigraded $\k$-algebra of $A$ by $A^\nat$. A right (resp. left) \emph{dg-$A$-module} is a right (resp. left) bigraded $A^\nat$-module $M = \bigoplus_{i, j \in \Z} M^j_i$ equipped with a bidegree $(0, 1)$ differential that satisfies the Leibniz rule: see \cite[Definition 2.2]{brown2023orlovs} for details. All modules are assumed to be right modules unless otherwise noted. Morphisms and quasi-isomorphisms of dg-algebras and dg-modules are defined in the evident way: once again, we refer to \cite[Definitions 2.1 and 2.2]{brown2023orlovs} for details. We let $A^{\op}$ denote the \emph{opposite dg-algebra} of $A$, as defined in \cite[Definition 2.1]{brown2023orlovs}. If $A$ and $B$ are dg-algebras, a \emph{dg-$A$-$B$-bimodule} is a right dg-$A^{\op} \otimes_k B$-module. 

Given a dg-$A$-module $M$, we set $M_i \coloneqq \bigoplus_{j \in \Z} M_i^j$ for all $i \in \Z$. We observe that $A_0$ is a dg-algebra (concentrated in internal degree 0), and each $M_i$ is a dg-$A_0$-module (concentrated in internal degree $i$). Similarly, we set $M^j \coloneqq \bigoplus_{i \in \Z} M^j_i$ for all $j \in \Z$. We say a dg-algebra $A$ is \emph{connected} if 
$A_0 = A_0^0 = \k$,
and $A_i^j = 0$ when $i<0$ or $j > 0$. Notice that, if $A$ is connected, then $A_0 = \k$ is a dg-$A$-module. 

We recall the definitions of the tensor product and internal $\Hom$ for dg-$A$-modules, as described, for instance, in \cite[Section 2]{brown2023orlovs}.
Given a dg-algebra $A$, a right dg-$A$-module $M$, and a left dg-$A$-module $N$, the tensor product $M \otimes_A N$ is a dg-$\kk$-module with differential
$$
m \otimes n \mapsto d_M(m) \otimes n + (-1)^{|m|}m \otimes d_N(n).
$$
If $M$ (resp. $N$) is an $A$-$A$-bimodule, then $M \otimes_A N$ is a left (resp. right) dg-$A$-module.
If $M$ and $N$ are right dg-$A$-modules, we let $\Hom_A(M, N)$ denote the $\k$-vector space of dg-$A$-module morphisms from $M$ to $N$. We denote by $\uHom_A(M, N)$ the \emph{internal $\Hom$} from $M$ to $N$, i.e. the dg-$\k$-vector space with underlying bigraded $\k$-vector space given by the $A^\nat$-linear internal $\Hom$ space $\uHom_{A^\nat}(M, N)$ and differential
$$
\a \mapsto d_N \a - (-1)^{|\a|} \a d_M.
$$
Notice that $\Hom_A(M, N)$ is the space of bidegree $(0,0)$ cocycles in $\uHom_A(M, N)$. If $M$ (resp. $N$) is an $A$-$A$-bimodule, then $\uHom_A(M, N)$ is a right (resp. left) dg-$A$-module. 

\subsection{Derived categories of dg-algebras}
\label{sec:derived}

We let $\on{Mod}(A)$ denote the category of dg-$A$-modules, $\D(A)$ the derived category of dg-$A$-modules, and $\Db(A) \subseteq \D(A)$ the subcategory given by objects isomorphic to dg-$A$-modules that are finitely generated over $A$. We refer the reader to \cite[Definition 2.13]{brown2023orlovs} for the definitions of $K$-projective, $K$-flat, and $K$-injective resolutions of dg-modules. Given a pair $M$ and $N$ of dg-modules over a dg-algebra $A$, $K$-flat resolutions $F^M$ and $F^N$ of $M$ and $N$, a $K$-projective resolution $P$ of $M$, and a $K$-injective resolution $I$ of $N$, the derived tensor product of $M$ and $N$ and derived Hom from $M$ to $N$ may be modeled as follows:
$$
M \otimes_A^{\LL} N \cong F^M \otimes_A N \cong M \otimes_A F^N, \quad  \RHom_A(M, N) \cong \underline{\Hom}_A(P, N) \cong \underline{\Hom}_A(M, I);
$$
where the isomorphisms are in the derived category $\D(\k)$. In particular, we write:
$$
\underline{\Ext}^i_A(M, N) \coloneqq H^i\RHom_A(M, N) \quad \text{and} \quad \Ext^i_A(M, N) \ce \underline{\Ext}^i_A(M, N)_0.
$$
\begin{remark}
\label{homotopy}
Let $K(A)$ denote the homotopy category of dg-$A$-modules. For any dg-$A$-module $M$ and $K$-injective dg-$A$-module $I$, the natural map
$
\Hom_{K(A)}(M,I)\to \Hom_{\D(A)}(M,I)
$
is an isomorphism \cite[Theorem 10.1.13]{Yekutieli2020}. Similarly, if $N$ is a dg-$A$-module, and $P$ is a $K$-projective dg-$A$-module, then the natural map
$
\Hom_{K(A)}(P,N)\to \Hom_{\D(A)}(P,N)
$
is an isomorphism \cite[Theorem 10.2.9]{Yekutieli2020}.
\end{remark}

\begin{prop}
\label{prop:injbimodule}
If $M$ and $N$ are dg-$A$-$A$-bimodules, then $\RHom_A(M, N)$ and $\RHom_{A^{\op}}(M, N)$ may be equipped with the structure of dg-$A$-$A$-bimodules.
\end{prop}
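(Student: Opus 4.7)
The plan is to model $\RHom_A(M, N)$ by taking an internal Hom against a $K$-injective resolution of $N$ chosen in the category of bimodules rather than of right $A$-modules, and then to verify that the resulting internal Hom inherits the required bimodule structure.

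I would first recall the underived picture. Setting $A^{\mathrm{e}} \ce A^{\op} \otimes_\kk A$, an $A$-$A$-bimodule is the same as a right dg-$A^{\mathrm{e}}$-module. Given two $A$-$A$-bimodules $M$ and $N$, the internal Hom $\uHom_A(M, N)$ (taken with respect to the right $A$-action) carries a natural $A$-$A$-bimodule structure: the left $A$-action is inherited from $N$ via $(a \cdot f)(m) \ce a \cdot f(m)$, and the right $A$-action is inherited from the left action on $M$ via $(f \cdot a)(m) \ce f(a \cdot m)$. These actions commute with each other and with the differential because the corresponding actions on $M$ and $N$ do.

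Next, I would choose a $K$-injective resolution $N \xar{\simeq} I$ in $\Mod(A^{\mathrm{e}})$; such resolutions exist by standard dg-algebra results. The proposed bimodule lift of $\RHom_A(M, N)$ is $\uHom_A(M, I)$.

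The essential---and, I expect, hardest---step is to show that $I$ remains $K$-injective after restriction along the inclusion $A \into A^{\mathrm{e}}$, $a \mapsto 1 \otimes a$, so that $\uHom_A(M, I)$ actually represents $\RHom_A(M, N)$ in $\D(\kk)$. I would prove this via the restriction--induction adjunction
\[
\uHom_A(X, I) \cong \uHom_{A^{\mathrm{e}}}(X \otimes_A A^{\mathrm{e}}, I),
\]
noting that $A^{\mathrm{e}}$ is free as a left $A$-module via this inclusion, since $A^{\op}$ is free over the field $\kk$. Hence $A^{\mathrm{e}}$ is $K$-flat over $A$ on the left, $X \otimes_A A^{\mathrm{e}}$ is acyclic whenever $X$ is, and then $K$-injectivity of $I$ over $A^{\mathrm{e}}$ forces $\uHom_A(X, I)$ to be acyclic. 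This yields the bimodule structure on $\RHom_A(M, N)$; the case of $\RHom_{A^{\op}}(M, N)$ is entirely symmetric via the other inclusion $A^{\op} \into A^{\mathrm{e}}$, $a \mapsto a \otimes 1$.
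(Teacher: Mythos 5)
Your proposal is correct and follows essentially the same route as the paper: the paper isolates your key step (transfer of $K$-injectivity along $A \into A^{\op}\otimes_\kk A$ via the restriction--induction adjunction plus $K$-flatness of $A^{\op}\otimes_\kk A$ over $A$) as a separate lemma, and then the proposition's proof invokes it just as you do. The only cosmetic difference is that the paper states the transfer lemma for an arbitrary morphism $A \to B$ with $B$ $K$-flat over $A$ and then specializes, whereas you argue directly in the $A^{\op}\otimes_\kk A$ case; your observation that $A^{\op}\otimes_\kk A$ is actually free over $A$ (since $\kk$ is a field) gives the needed $K$-flatness more concretely than the paper spells out.
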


To prove Proposition~\ref{prop:injbimodule}, we will need the following technical result. 

\begin{lem}\label{homotopy_flat}
Let $A$ be a dg-algebra. 
\begin{enumerate}
\item Let $f : A \to B$ be a dg-algebra morphism such that $B$ is $K$-flat as a left dg-$A$-module. If $I$ is a $K$-injective right dg-$B$-module, then it is also a $K$-injective right dg-$A$-module. 
    \item Let $N$ be a dg-$A$-$A$-bimodule, and let $I$ be a $K$-injective resolution of $N$ as an $A^{\op} \otimes_\k A$-module. The resolution $I$ is $K$-injective as both a right dg-$A$-module and a right dg-$A^{\op}$-module. Thus, $I$ is a $K$-injective resolution of $N$ as both a right dg-$A$-module and a right dg-$A^{\op}$-module. 
\end{enumerate}
\end{lem}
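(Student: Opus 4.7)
The plan is to prove (1) via the extension/restriction-of-scalars adjunction, and then deduce (2) by applying (1) twice, to the two canonical embeddings $A, A^{\op} \hookrightarrow A^{\op} \otimes_\k A$.

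For (1), fix an acyclic right dg-$A$-module $M$; to show that $I$ (viewed as a right dg-$A$-module) is $K$-injective, I would show that $\uHom_A(M, I)$ is acyclic. The standard adjunction between extension and restriction of scalars along $f \co A \to B$ upgrades to a natural isomorphism of dg-$\k$-modules
\[
\uHom_A(M, I) \cong \uHom_B(M \otimes_A B, I).
\]
Because $B$ is $K$-flat as a left dg-$A$-module, tensoring the acyclic module $M$ with $B$ over $A$ yields an acyclic right dg-$B$-module, and then $K$-injectivity of $I$ over $B$ forces the right-hand side to be acyclic. Hence the left-hand side is acyclic, as required.

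For (2), I would apply (1) to the two dg-algebra maps
\[
\iota_A \co A \to A^{\op} \otimes_\k A, \quad a \mapsto 1 \otimes a, \qquad \iota_{A^{\op}} \co A^{\op} \to A^{\op} \otimes_\k A, \quad a \mapsto a \otimes 1.
\]
The only nontrivial hypothesis to verify is that $A^{\op} \otimes_\k A$ is $K$-flat as a left dg-$A$-module via $\iota_A$, and symmetrically as a left dg-$A^{\op}$-module via $\iota_{A^{\op}}$. For any right dg-$A$-module $M$, associativity and symmetry of the tensor product give a natural isomorphism of dg-$\k$-modules
\[
M \otimes_A (A^{\op} \otimes_\k A) \cong M \otimes_\k A^{\op},
\]
and since $\k$ is a field, every dg-$\k$-module is $K$-flat over $\k$, so the endofunctor $(-) \otimes_\k A^{\op}$ preserves acyclicity. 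This gives the required $K$-flatness, and (1) then shows that $I$ is $K$-injective both as a right dg-$A$-module and as a right dg-$A^{\op}$-module. The fact that the quasi-isomorphism $N \to I$ remains a quasi-isomorphism after restriction along $\iota_A$ or $\iota_{A^{\op}}$ is automatic, since restriction of scalars is exact and preserves underlying cohomology.

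There is no deep obstacle here; the main care is in bookkeeping the left/right structures and identifying dg-bimodules with one-sided dg-modules over the enveloping dg-algebra $A^{\op} \otimes_\k A$. The field hypothesis on $\k$ is used only through the trivial $K$-flatness of dg-$\k$-modules, and all sign conventions are inherited from the standard ones recalled in Section~\ref{sec:dba}.
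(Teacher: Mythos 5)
Your proof is correct and follows essentially the same route as the paper's: part (1) via the tensor-hom adjunction $\uHom_A(M, I) \cong \uHom_B(M \otimes_A B, I)$ together with $K$-flatness of $B$, and part (2) by applying (1) to the two inclusions of $A$ and $A^{\op}$ into the enveloping algebra. The only (welcome) difference is that you spell out the verification that $A^{\op} \otimes_\k A$ is $K$-flat over $A$ and $A^{\op}$, which the paper simply asserts.
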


\begin{proof}
  To prove (1), let $M$ be an exact right dg-$A$-module. Since $B$ is a $K$-flat left dg-$A$-module, the right dg-$B$-module $M \otimes_A B$ is exact. It follows that $\lhom_B(M \otimes_A B, I)$ is exact. By adjunction, we have
  $
\lhom_B(M \otimes_A B, I) \cong \lhom_A(M, \lhom_B(B, I)) \cong \lhom_A(M, I). 
$
This proves (1). 
As for (2): since $A^{\op} \otimes_\k A$ is $K$-flat as both a left dg-$A$-module and a left dg-$A^{\op}$-module, Lemma~\ref{homotopy_flat} implies that $I$ is $K$-injective as both a right dg-$A$-module and a right dg-$A^{\op}$-module. 
\end{proof}

\begin{proof}[Proof of Proposition~\ref{prop:injbimodule}]
Choose a $K$-injective resolution $I$ of $N$ as a dg-$A^{\op} \otimes_\k A$-module. By Lemma~\ref{homotopy_flat}(2), we see that $\RHom_A(M, N)$ can be modeled as $\underline{\Hom}_A(M, I)$, and $\RHom_{A^{\op}}(M, N)$ can be modeled as $\underline{\Hom}_{A^{\op}}(M, I)$. It follows that both $\RHom_A(M, N)$ and $\RHom_{A^{\op}}(M, N)$ may be equipped with dg-$A$-$A$-bimodule structures. 
\end{proof}

\subsection{Gorenstein dg-algebras}

For the rest of the paper, we will work under the following setup, which is identical to \cite[Setup 2.8]{brown2023orlovs}:

\begin{setup}
\label{setup}
Let $A$ be a connected dg-algebra such that $H^0(A)$ is Noetherian, and the total cohomology algebra $H(A)$ is finitely generated as an $H^0(A)$-module. 
\end{setup}

\begin{remarks}
\label{rem:easy}
Let $A$ be a dg-algebra as in Setup~\ref{setup}.
\begin{enumerate}
\item Let $\Df(A)$ denote the subcategory of $\D(A)$ given by dg-$A$-modules $M$ such that the total cohomology $H(M)$ is finitely generated over the total cohomology $H(A)$. By \cite[Proposition 2.17]{brown2023orlovs}, the canonical map $\Db(A) \to \Df(A)$ is an equivalence.
\item If $M \in \Db(A)$, then it follows from \cite[Proposition 2.16]{brown2023orlovs} that $\dim_\kk H(M)_\ell < \infty$ for all $\ell \in \Z$. 
\end{enumerate}
\end{remarks}

\begin{defn}\label{def:gorenstein}
Let $A$ be as in Setup~\ref{setup}. We say $A$ is \emph{Gorenstein} if:
\begin{enumerate}
\item The functor $\RHom_A( - , A)$ maps $\Db(A)$ to $\Db(A^{\op})$, and  $\RHom_{A^{\op}}( - , A)$ maps $\Db(A^{\op})$ to $\Db(A)$.
\item Given $M \in \Db(A)$ and $N \in \Db(A^{\op})$, the canonical maps
$$
M \to \RHom_{A^{\op}}(\RHom_A(M, A), A) \quad \text{and} \quad N \to \RHom_{A}(\RHom_{A^{\op}}(N, A), A)
$$
are isomorphisms in $\D(A)$ and $\D(A^{\op})$ respectively.
\item There is an isomorphism $\RHom_A(\k, A) \cong \k(a)[-n]$ in $\D(A)$ for some $a, n \in \Z$. 
\end{enumerate}
When $A$ is Gorenstein, the integer $a$ in (3) is called the \emph{Gorenstein parameter} of $A$.
\end{defn}

There are other definitions of Gorenstein dg-algebras in the literature: see \cite[Remark 2.19]{brown2023orlovs} for a detailed discussion. We refer to \cite[Examples 2.23---2.27]{brown2023orlovs} for several examples of Gorenstein dg-algebras.

\subsection{Minamoto's dg version of minimal injective resolutions}
\label{sec:minamoto}
Let $A$ be as in Setup~\ref{setup}. We now discuss (a bigraded variant of) a notion of minimal injective resolutions for dg-modules recently developed by Minamoto~\cite{Minamoto2, minamoto}. We first observe that, since $A$ is connected, the inclusion $A^0 \to A$ is a morphism of dg-algebras; in particular, $A$ is a dg-$A^0$-module. Moreover, the complex $\lhom_{A^0}(A,M)$ is naturally a dg-$A$-module for any dg-$A^0$-module $M$.

\begin{nota}(\cite{minamoto} Definition 3.9)
   Let $\Inj(A^0)$ denote the category of graded injective $A^0$-modules (not dg-modules), and let $\cI$ be the full subcategory of $\D(A)$ given by objects isomorphic to $\lhom_{A^0}(A,K)$ for some $K\in \Inj(A^0)$. 
\end{nota}

\begin{remark}
\label{rem:semiinj}
Any object in $\cI$ is both $K$-injective and injective as an $A^{\nat}$-module~\cite[Corollary 3.13]{minamoto}.
\end{remark}
The following is a bigraded analogue of \cite[Theorem 3.10]{minamoto}; its proof is essentially the same. 

\begin{thm}[cf. \cite{minamoto} Theorem 3.10]\label{thm:minamoto}
Let $A$ be as in Setup~\ref{setup}.
\begin{enumerate}
    \item If $M\in \D(A)$, and $I\in \mathcal{I}$, then 
    $\Hom_{\D(A)}(M,I)\cong \Hom_{H^0(A)}(H^{0}(M),H^0(I))$.

    \item The $0$-th cohomology functor $H^0:\D(A)\rightarrow \Mod(H^0(A))$ induces an equivalence 
    \[H^0: \mathcal{I} \xra{\simeq} \Inj(H^0(A)).\]
\end{enumerate}
\end{thm}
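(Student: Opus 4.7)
The plan is to follow the strategy of Minamoto's proof of \cite[Theorem 3.10]{minamoto} verbatim, inserting the bigraded bookkeeping where needed. For part (1), fix $I=\lhom_{A^0}(A,K)\in\cI$ with $K\in\Inj(A^0)$. By Remark~\ref{rem:semiinj}, $I$ is $K$-injective, so Remark~\ref{homotopy} identifies $\Hom_{\D(A)}(M,I)\cong \Hom_{K(A)}(M,I)$. Since $A$ is connected we have $A_i^j=0$ for $j>0$, so $d_A$ vanishes on $A^0$ and the inclusion $A^0\inj A$ is a morphism of dg-algebras (with $A^0$ carrying the zero differential); the corresponding coinduction-restriction adjunction yields
\[
\Hom_{K(A)}(M,\lhom_{A^0}(A,K))\cong \Hom_{K(A^0)}(M,K),
\]
where on the right $M$ is regarded as a dg-$A^0$-module by restriction.

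Because $K$ is concentrated in cohomological degree $0$ and is injective as a bigraded $A^0$-module, a direct computation of chain maps modulo homotopy---using the injectivity of $K$ to extend maps from $H^0(M)$ to $M^0/\im d_M^{-1}$---identifies $\Hom_{K(A^0)}(M,K)\cong \Hom_{A^0}(H^0(M),K)$. Similarly, $H^0(I)$ is the kernel of the induced map $\uHom_{A^0}(A^0,K)\to \uHom_{A^0}(A^{-1},K)$, which consists of $A^0$-linear maps factoring through $H^0(A)=A^0/\im d_A^{-1}$; that is, $H^0(I)\cong \Hom_{A^0}(H^0(A),K)$. The Hom-restriction adjunction along the surjection $A^0\onto H^0(A)$ then gives
\[
\Hom_{A^0}(H^0(M),K)\cong \Hom_{H^0(A)}\bigl(H^0(M),\Hom_{A^0}(H^0(A),K)\bigr)= \Hom_{H^0(A)}(H^0(M),H^0(I)),
\]
completing part (1). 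Full faithfulness of $H^0\co \cI\to \Inj(H^0(A))$ in part (2) is then immediate, since applying part (1) with $M\in\cI$ and varying the target in $\cI$ shows that $H^0$ is bijective on Hom sets.

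It remains to prove essential surjectivity of $H^0\co\cI\to \Inj(H^0(A))$, which I expect to be the main obstacle. Given $J\in\Inj(H^0(A))$, I view $J$ as a bigraded $A^0$-module via $A^0\onto H^0(A)$ and let $K$ be its injective envelope over $A^0$; such an envelope exists since the hypothesis on $A^\nat$ makes $A^0$ a Noetherian bigraded ring. Then $\lhom_{A^0}(A,K)\in\cI$ and its $0$-th cohomology is $\Hom_{A^0}(H^0(A),K)$ by the calculation above, so the key point is the identification $\Hom_{A^0}(H^0(A),K)\cong J$. Since $J$ is injective over $H^0(A)$, the natural inclusion $J\inj \Hom_{A^0}(H^0(A),K)$ splits as $H^0(A)$-modules; a complementary summand $L$ satisfies $J\cap L=0$ inside $K$, and the essentiality of $J\subseteq K$ forces $L=0$. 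This completes the proof.
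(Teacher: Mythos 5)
Your proof is correct and follows essentially the same route as the paper: reduce to the homotopy category via $K$-injectivity of objects in $\cI$, apply the coinduction--restriction adjunction along $A^0 \into A$, use injectivity of $K$ to pass to $H^0(M)$, and apply tensor--Hom adjunction along $A^0 \onto H^0(A)$; full faithfulness in (2) then follows from (1). The one place you diverge is essential surjectivity of $H^0\co\cI\to\Inj(H^0(A))$: where the paper simply cites a bigraded analogue of Minamoto's \cite[Lemma 3.14]{minamoto}, you supply a direct argument via injective envelopes over $A^0$ and a splitting-versus-essentiality contradiction, which is a correct and self-contained way to obtain that lemma.
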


\begin{proof}
Let $M\in \D(A)$ and $I \in \cI$. Choose $K\in \Inj(A^0)$ such that $I \cong \lhom_{A^0}(A,K)$. We note that:
\begin{equation}
\label{eqn:injective}
\lhom_{A^0}(H^0(N),K) \cong H^0(\lhom_{A^0}(N,K)) \quad \text{for any dg-$A$-module $N$ and any injective $A^0$-module $K$}.
\end{equation} 
We have:
\begin{align*}
\Hom_{\D(A)}(M, I)             \cong \Hom_{K(A)}(M,I) 
             = H^0(\lhom_A(M,I))_0 
                             & \cong H^0(\lhom_{A^0}(M,K))_0 \\
                                       & \cong  \Hom_{A^0}(H^0(M),K) \\
                                       & \cong  \Hom_{A^0}(H^0(M),\lhom_{A^0}(H^0(A),K)),
        \end{align*}
where the first isomorphism follows from Remark~\ref{homotopy}, the second from adjunction, the third from~\eqref{eqn:injective}, and the last from adjunction again, along with the observation that $H^0(M) \otimes_{A^0} H^0(A) \cong H^0(M)$. Applying~\eqref{eqn:injective} once more proves~(1). For (2), we first observe that (1) implies that the functor $H^0 \co \cI \to \on{Inj}(H^0(A))$ is fully faithful. A bigraded analogue of \cite[Lemma 3.14]{minamoto} implies that every injective $H^0(A)$-module is of the form $\uHom_{A^0}(H^0(A), K)$ for some injective $A^0$-module $K$; applying \eqref{eqn:injective} therefore finishes the proof. 
\end{proof}

We now give a bigraded version of Minamoto's notion of minimal injective resolutions of dg-modules, which Minamoto calls ``minimal ifij resolutions" in \cite{minamoto}. Before stating it, we fix some notation: given a dg-$A$-module $M$, we write
\begin{equation}
\label{eqn:supinf}
\sup(M) = \sup\{i \text{ : } H^i(M) \ne 0\} \quad \text{and} \quad \inf(M) = \inf\{i \text{ : } H^i(M) \ne 0\}.
\end{equation}

 \begin{dfn}[\cite{minamoto} Definition 3.19] 
 \label{def:mininj}
 Let $A$ be as in Setup~\ref{setup}, and let $M$ be a dg-$A$-module such that $M$ is not exact and $H^i(M) = 0$ for $i \ll 0$. A \emph{minimal injective resolution of $M$} is a sequence of exact triangles
 $
\left[M_i \xra{f_i} I_i \xra{g_i} M_{i - 1} \to \right]
 $
in $\D(A)$ for $i \le 0$ such that $M_0 = M$, and for all $i \le 0$:
\begin{enumerate}
\item We have $I_i[\inf(I_i)] \in \cI$, and $\inf(I_i) = \inf(M_i)$.
\item The map $f_i \co H^{\inf(M_i)}(M_i) \to H^{\inf(M_i)}(I_i)$ is a graded $H^0(A)$-linear injective envelope. 
\end{enumerate}
 \end{dfn}

Since every graded module over an ordinary graded algebra admits an injective hull, it follows from \Cref{thm:minamoto} that every object $M$ as in Definition~\ref{def:mininj} admits a minimal injective resolution.

\begin{remark}
\label{rem:inf}
Let $A$ and $M$ be as in Definition~\ref{def:mininj}, and let $
\left[M_i \xra{f_i} I_i \xra{g_i} M_{i - 1} \to \right]
 $ be a minimal injective resolution of $M$. It is observed in \cite[Definition 3.19]{minamoto} that $\inf(I_i) \ge \inf(M)$ for all $i \le 0$. 
\end{remark}

\section{Balanced dualizing dg-modules}\label{section:3}

Yekutieli introduced in \cite{YEKUTIELI1_dualizing} the notion of a balanced dualizing complex over a noncommutative graded algebra. The main goal of this section is to introduce a generalization of balanced dualizing complexes in the setting of dg-algebras and to establish some cases in which they exist. We also introduce a notion of local cohomology for differential bigraded algebras; see also work of Shaul \cite{shaullocal, shaullocal2} for a similar dg-version of local cohomology. In addition, we prove a dg-version of the local duality theorem, which is a generalization of Yekutieli's noncommutative local duality theorem~\cite[Theorem 4.18]{YEKUTIELI1_dualizing}.

\subsection{Local cohomology of dg-modules}
\label{sec:lc}
Let $A$ be as in Setup~\ref{setup} and $\m$ its homogeneous maximal ideal. Given $M \in \D(A)$, we define $\RR\Gamma_\m(M) \ce \underset{d \to \infty}{\colim} \text{ }\RHom_A(A/A_{\ge d}, M)$, the \emph{derived $\m$-torsion module of $M$}. The $i^{\th}$ cohomology module $H^i_\m(M) \ce  \underset{d \to \infty}{\colim}\text{ } \uExt^i_A(A/A_{\ge d}, M)$ of $\RR\Gamma_\m(M)$ is called the $i^{\th}$ \emph{local cohomology of $M$} (with respect to $\m$). 

Let $\D^{\Tors}(A)$ be the full subcategory of $\D(A)$ given by objects $T$ such that every class $t \in H(T)$ satisfies $t \cdot H^0(A)_{\ge n} = 0$ for $n \gg 0$. We let $\D^{\tors}(A)$ be the full subcategory of  $\Db(A)$ given by objects that are also in $\D^{\Tors}(A)$. The functor $\RR\Gamma_\m$ clearly takes values in $\D^{\Tors}(A)$.

\begin{prop}
\label{prop:gen}
 The functor $\RR\Gamma_\m$ is the right adjoint of the inclusion $\D^{\Tors}(A) \into \D(A)$.
\end{prop}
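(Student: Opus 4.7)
The approach is to construct the counit of the putative adjunction and verify the universal property directly. For each $M \in \D(A)$ and $d \ge 1$, the surjection $A \onto A/A_{\ge d}$ induces a morphism $\RHom_A(A/A_{\ge d}, M) \to \RHom_A(A, M) \simeq M$; these are compatible with the directed system in $d$, and passing to the colimit yields a natural transformation $\epsilon_M \co \RR\Gamma_\m(M) \to M$ which will serve as the counit. Since the excerpt already observes that $\RR\Gamma_\m$ takes values in $\D^{\Tors}(A)$, it remains only to show that precomposition with $\epsilon_M$ induces a bijection $\Hom_{\D(A)}(T, \RR\Gamma_\m(M)) \to \Hom_{\D(A)}(T, M)$ for all $T \in \D^{\Tors}(A)$ and $M \in \D(A)$.

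The key reduction is to prove that $\epsilon_T \co \RR\Gamma_\m(T) \to T$ is itself an isomorphism when $T \in \D^{\Tors}(A)$. From this, the adjunction follows formally: the inverse of $\epsilon_M \circ (-)$ sends $f \co T \to M$ to $\RR\Gamma_\m(f) \circ \epsilon_T^{-1}$, and uniqueness is forced by naturality of $\epsilon$ together with the fact that $\epsilon_T$ is invertible. To show $\epsilon_T$ is an iso, I would examine its cone, which again lies in $\D^{\Tors}(A)$, reducing the question to cohomology vanishing. Using the colimit definition of $\RR\Gamma_\m$, this amounts to the lifting statement that every class $t \in H^i(T)$ lies in the image of $\uExt^i_A(A/A_{\ge d}, T) \to H^i(T)$ for some $d$, and any two such lifts agree after further enlarging $d$. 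The torsion hypothesis on $T$ together with the finite generation of $H(A)$ over $H^0(A)$ from Setup~\ref{setup} ensures every such class is annihilated by $A_{\ge d}$ for $d \gg 0$, supplying the annihilation needed to perform the lift.

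The main obstacle will be carrying out this lifting at the chain level in the dg setting: one must upgrade the cohomological annihilation ``$t \cdot A_{\ge d} = 0$'' into an honest $A$-linear morphism from the cyclic dg-module $A/A_{\ge d}$ into a $K$-injective resolution of $T$. In the classical graded-algebra setting this is automatic, since Ext is computed by genuine module maps, but in the dg case one must navigate the interaction with the differential. I expect to handle this by a standard truncation argument reducing to $T$ concentrated in a single cohomological degree, where the dg structure degenerates on the relevant piece, and then constructing the lift directly on a $K$-injective resolution (or, alternatively, invoking the minimal injective resolutions of Theorem~\ref{thm:minamoto}, whose $H^0$ realizes the required $H^0(A)$-injective envelopes).
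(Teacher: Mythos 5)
The counit $\epsilon_M\colon \RR\Gamma_\m(M)\to M$ is constructed exactly as in the paper, and your identification of what must be shown at the chain level is accurate. However, the pivotal step in your architecture—that the adjunction ``follows formally'' once $\epsilon_T$ is known to be an isomorphism for every $T\in\D^{\Tors}(A)$—has a genuine gap. Surjectivity of $g\mapsto\epsilon_M\circ g$ does follow formally: your candidate inverse $\Psi(f)=\RR\Gamma_\m(f)\circ\epsilon_T^{-1}$ satisfies $\epsilon_M\circ\Psi(f)=f$ by naturality. But the injectivity claim does not. Suppose $g\colon T\to\RR\Gamma_\m(M)$ has $\epsilon_M\circ g=0$. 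Naturality of $\epsilon$ applied to $g$ gives $g=\epsilon_{\RR\Gamma_\m M}\circ\RR\Gamma_\m(g)\circ\epsilon_T^{-1}$, and applying $\RR\Gamma_\m$ to $\epsilon_M\circ g=0$ gives $\RR\Gamma_\m(\epsilon_M)\circ\RR\Gamma_\m(g)=0$. To conclude $\RR\Gamma_\m(g)=0$ (hence $g=0$) you need $\RR\Gamma_\m(\epsilon_M)$ to be a monomorphism, equivalently the idempotent-comonad coherence $\RR\Gamma_\m(\epsilon_M)=\epsilon_{\RR\Gamma_\m M}$. This is a genuine extra condition that is \emph{not} a formal consequence of $\epsilon_T$ being an isomorphism on the subcategory; without it, the bijectivity of $\Hom_{\D(A)}(T,\RR\Gamma_\m M)\to\Hom_{\D(A)}(T,M)$—equivalently the vanishing $\Hom(T,\mathrm{cone}(\epsilon_M))=0$ for all torsion $T$—remains unproved.

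The paper avoids this entirely by a different reduction: rather than proving $\epsilon_T$ is an isomorphism for all torsion $T$, it reduces the bijectivity statement itself. Since every $T\in\D^{\Tors}(A)$ is a filtered colimit of objects of $\D^{\tors}(A)$, and $\D^{\tors}(A)$ is generated by the twists $\k(i)$, one only needs the bijection for $T=\k$; the paper then constructs the inverse directly at the chain level via the map $\phi\colon\uHom_A(\k,I)\to\underset{d}{\colim}\,\uHom_A(A/A_{\ge d},I)$, whose image is annihilated by $\m$. This sidesteps both the coherence issue above and the chain-level lifting problem you flag as your ``main obstacle'': no lift of a cohomology class to a morphism $A/A_{\ge d}\to I$ is ever produced, and the truncation or Minamoto-type machinery you contemplate is not needed. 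Were you to patch your argument, the natural fix would be to abandon the reduction to ``$\epsilon_T$ iso'' and instead prove the bijection directly for a generating object as the paper does—at which point you have reproduced the paper's proof.
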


\begin{proof}
Let $M \in \D(A)$. The surjections $A \to A / A_{\ge d}$ induce 
maps $\RHom_A(A / A_{\ge d}, M) \to M$ for all $d \ge 0$; passing to the colimit yields a natural morphism \begin{equation}
\label{eqn:naturalmap}
\RR\Gamma_\m(M) \to M.
\end{equation}
For any $T \in \D^{\Tors}(A)$, the morphism \eqref{eqn:naturalmap} induces a natural map
\begin{equation}
\label{eqn:adj}
\Hom_{\D(A)}(T, \RR\Gamma_\m(M)) \to \Hom_{\D(A)}(T, M).
\end{equation}
It suffices to show that ~\eqref{eqn:adj} is an isomorphism. Since $T$ is a filtered colimit of objects in $\D^{\tors}(A)$, we may assume $T \in \D^{\tors}(A)$. Moreover, since $\D^{\tors}(A)$ is generated by $\k(i)$ for all $i \in \Z$~\cite[Proposition 3.1]{brown2023orlovs}, we may further reduce to the case where $T=\k$. We now define an inverse 
$
\Hom_{\D(A)}(\kk, M) \to \Hom_{\D(A)}(\kk, \RR\Gamma_\m(M)) 
$
of the map~\eqref{eqn:adj}. Choose a $K$-injective resolution $I$ of $M$.  
Let $\phi \co \uHom_A(\k, I) \to \underset{d \to \infty}{\colim} \text{ }\uHom_A(A/A_{\ge d}, I)$ denote the canonical map. Since the image of $\phi$ is annihilated by $\m$, $\phi$ determines a chain map
$
\uHom_A(\k, I) \to \uHom_A(\k, \underset{d \to \infty}{\colim} \text{ }\uHom_A(A/A_{\ge d}, I)),
$
which yields a map
$
\Hom_{K(A)}(\k, I) \to \Hom_{K(A)}(\k, \underset{d \to \infty}{\colim} \text{ }\uHom_A(A/A_{\ge d}, I)),
$
and hence, via Remark~\ref{homotopy}, a map
\begin{equation}
\label{eqn:inverse}
\Hom_{\D(A)}(\k, M) \to \Hom_{\D(A)}(\k, \RR\Gamma_\m(M)).
\end{equation}
The map~\eqref{eqn:inverse} gives an inverse to  \eqref{eqn:adj} when $T = \k$.  Indeed, let $\alpha \in \Hom_{K(A)}(\k, \underset{d \to \infty}{\colim} \text{ }\uHom_A(A/A_{\ge d}, I))$. Since $\alpha(1)$ is annihilated by $\m$, we may represent it as an element $\{\alpha_d\}_{d \ge 1} \in \bigoplus_{d \ge 1} \uHom_A(A/A_{\ge d}, I)$ such that $\alpha_d = 0$ for $d \ne 1$. The map \eqref{eqn:adj} sends $\alpha$ to $\alpha_1$. On the other hand, the map \eqref{eqn:inverse} sends an element $\beta \in \Hom_{K(A)}(\k, I)$ to the map in $\Hom_{K(A)}(\k, \underset{d \to \infty}{\colim} \text{ }\uHom_A(A/A_{\ge d}, I))$ that sends $1$ to the element of $\underset{d \to \infty}{\colim} \text{ }\uHom_A(A/A_{\ge d}, I)$ represented by $\{\beta_d\}_{d \ge 1} \in \bigoplus_{d \ge 1} \uHom_A(A/A_{\ge d}, I)$, where $\beta_1 = \beta$ and $\beta_d = 0$ for~$d \ne 1$. One easily checks that these maps are inverses.
\end{proof}

\begin{remark}
\label{rem:counit}
It follows from the proof of Proposition~\ref{prop:gen} that the map \eqref{eqn:naturalmap} is the counit of the adjunction between the inclusion $\D^{\Tors}(A) \into \D(A)$ and $\dGamma_\m$. 
\end{remark}

If $M\in \D(A^{\op})$, denote its derived $\m$-torsion module and local cohomology by $\RR\Gamma_{\m^{\op}}(M)$ and $H^*_{\m^{\op}}(M)$.

\begin{remark}
\label{rem:bimodule}
Let $M$ be a dg-$A$-$A$-bimodule. Applying Proposition~\ref{prop:injbimodule}, we may choose a $K$-injective resolution of $M$ as an $A^{\op} \otimes_\k A$-module that is also $K$-injective as a dg-$A$-module and a dg-$A^{\op}$-module. For all $d \in \Z$, the objects $\RHom_A(A / A_{\ge d}, M)$ and $\RHom_{A^{\op}}(A^{\op} / A^{\op}_{\ge d}, M)$ are dg-$A$-$A$-bimodules; it follows that both $\RR\Gamma_{\m}(M)$ and $\RR\Gamma_{\m^{\op}}(M)$ are dg-$A$-$A$-bimodules as well. Moreover, the counit map \eqref{eqn:naturalmap} is a morphism of dg-$A$-$A$-bimodules. 
\end{remark}

The following result plays no role in what follows, but we include it as it may be of independent interest:

\begin{prop}
Let $A$ be as in Setup~\ref{setup}, $\m$ its homogeneous maximal ideal, and $\n$ the homogeneous maximal ideal of $A^0$. Given a dg-$A$-module $M$, there is an isomorphism $\dGamma_\m(M) \cong \dGamma_\n(M)$ in $\D(A^0)$. 
\end{prop}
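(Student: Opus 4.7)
The plan rests on the following torsion comparison lemma: for any graded $H(A)$-module $N$, the submodule of elements killed by some power of $\m$ coincides with the submodule of elements killed by some power of $\n$, where $\n$ acts through the composition $A^0 \twoheadrightarrow H^0(A) \hookrightarrow H(A)$. One containment is immediate from $\n \subseteq \m$. For the converse, Setup~\ref{setup} guarantees that $H(A)$ is finitely generated over $H^0(A)$, so I would choose $H^0(A)$-generators of $H(A)$ of internal degrees at most some $D$. A direct filtration argument then yields $H(A)_{\ge k(D+1)} \subseteq (H^0(A)_{\ge 1})^{k} \cdot H(A)$ for every $k \ge 0$; combining with the easy containment $\m^N \subseteq H(A)_{\ge N}$, this proves the converse.

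Granting the lemma, I would observe that $A^0$ is itself graded Noetherian (as $A$ is connected and $A^\nat$ is bigraded Noetherian). The analogue of Proposition~\ref{prop:gen} applied to $A^0$ then shows that $\RR\Gamma_{\n}$ is right adjoint to the inclusion $\D^{\Tors}(A^0) \hookrightarrow \D(A^0)$. Applying the torsion comparison lemma to the cohomology of $\RR\Gamma_{\m}(M)$ shows that $\RR\Gamma_{\m}(M)$, viewed via restriction in $\D(A^0)$, lies in $\D^{\Tors}(A^0)$. The counit map $\RR\Gamma_{\m}(M) \to M$ in $\D(A^0)$ therefore factors uniquely as
\[
\RR\Gamma_{\m}(M) \xrightarrow{\phi} \RR\Gamma_{\n}(M) \to M,
\]
producing the candidate isomorphism $\phi$.

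To verify that $\phi$ is an isomorphism, I would test against an arbitrary $T \in \D^{\Tors}(A^0)$. The tensor--hom adjunction $\Hom_{\D(A^0)}(T, X) \cong \Hom_{\D(A)}(T \Lotimes_{A^0} A, X)$ for $X \in \D(A)$, applied to both $X = M$ and $X = \RR\Gamma_{\m}(M)$, combined with the universal property of $\RR\Gamma_{\m}$ from Proposition~\ref{prop:gen}, reduces the claim to verifying that $T \Lotimes_{A^0} A \in \D^{\Tors}(A)$ for every $T \in \D^{\Tors}(A^0)$. \emph{This last verification is the main obstacle.} By the torsion comparison lemma, it suffices to show that the cohomology of $T \Lotimes_{A^0} A$ is $\n$-torsion over $A^0$; a triangulated-generator reduction to $T = \kk$ (using that $\D^{\Tors}(A^0)$ is generated as a localizing subcategory by the shifts $\kk(i)$, in analogy with \cite[Proposition 3.1]{brown2023orlovs}) then leaves the concrete statement that each $\Tor^{A^0}_i(\kk, A)$ is $\n$-torsion over $A^0$, which I would settle by inspecting a free resolution of $\kk$ over the graded Noetherian ring $A^0$ in light of the Setup~\ref{setup} hypothesis that $H(A)$ is finitely generated over $H^0(A)$.
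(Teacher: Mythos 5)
Your overall strategy is the same one the paper uses: both arguments hinge on showing that extension of scalars $F = (-)\Lotimes_{A^0} A$ carries $\D^{\Tors}(A^0)$ into $\D^{\Tors}(A)$, and then exploit the adjunction from Proposition~\ref{prop:gen}. The paper dispatches the key preservation-of-torsion step by citing \cite[Proposition 3.6]{brown2023orlovs}, and then finishes more efficiently than you do: rather than constructing the comparison map $\phi$ and checking it is an isomorphism by testing against every $T\in\D^{\Tors}(A^0)$, it observes that $G\circ\RR\Gamma_\m$ and $\RR\Gamma_\n\circ G$ are both right adjoint to the composite $\D^{\Tors}(A^0)\xra{F}\D^{\Tors}(A)\into\D(A)$, and invokes uniqueness of right adjoints. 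Your route reaches the same endpoint, just with more hands-on bookkeeping.

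Two points worth flagging. First, your ``torsion comparison lemma'' is doing less work than you think: in the paper's conventions, $\D^{\Tors}(A)$ is already defined by the condition that cohomology classes are annihilated by $H^0(A)_{\ge n}$ for $n\gg 0$, and the $A^0$-action on $H(T)$ for $T\in\D(A)$ factors through the surjection $A^0\onto H^0(A)$. So the claim that restriction carries $\D^{\Tors}(A)$ into $\D^{\Tors}(A^0)$ is essentially immediate from the definitions --- no finiteness hypothesis on $H(A)$ is needed there. Your lemma would be genuinely necessary if torsion were defined via the full ideal $H(A)_{\ge 1}$, but it is not. Second, the one step you correctly identify as the ``main obstacle'' is left as a sketch, and the sketch is slightly imprecise: $A$ is a complex of $A^0$-modules, not a single module, so the relevant objects are the hyper-Tor groups $H^i(\kk\Lotimes_{A^0} A)$ rather than $\Tor^{A^0}_i(\kk,A)$. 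The argument still goes through --- a hyper-Tor spectral sequence with $E_2$-page $\Tor^{A^0}_{-p}(\kk,H^q(A))$ has all terms annihilated by $\n$, and the finiteness of $H(A)$ over $H^0(A)$ and the boundedness of $A$ in cohomological degree guarantee convergence --- but as written the reduction to honest $\Tor$'s glosses over the dg-structure. This is precisely the content that the paper outsources to its citation.
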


\begin{proof}
The proof of \cite[Proposition 3.6]{brown2023orlovs} implies that the extension of scalars functor $F \co \D(A^0) \to \D(A)$ sends $\D^{\Tors}(A^0)$ to $\D^{\Tors}(A)$. We therefore have a commutative square
$$
\xymatrix{
\D^{\Tors}(A^0) \ar[r] \ar[d]^-{F} & \D(A^0) \ar[d]^-{F} \\
\D^{\Tors}(A) \ar[r] & \D(A),
}
$$
where the horizontal maps are inclusions. Of course, $F$ is the left adjoint of the restriction of scalars functor~$G$, and it follows from Proposition~\ref{prop:gen} that the horizontal maps are left adjoints of the derived $\n$-torsion and $\m$-torsion functors; passing to right adjoints, we thus have a commutative square
$$
\xymatrix{
\D^{\Tors}(A^0)  & \ar[l]_-{\RR\Gamma_\n} \D(A^0)  \\
\D^{\Tors}(A) \ar[u]_-{G}& \D(A) \ar[l]_-{\RR\Gamma_\m} \ar[u]_-{G}.
}
$$
In particular, both  $G \circ \dGamma_\m$ and $\dGamma_\n \circ G$ are right adjoints to the functor $\D^{\Tors}(A^0) \xra{F} \D^{\Tors}(A) \to \D(A)$. We therefore have an isomorphism $\RR\Gamma_\m(M) \xra{\cong} \RR\Gamma_\n(M)$ in $\D(A^0)$.
\end{proof}

\subsection{Local Duality}
We now prove a dg-version of local duality, which is a generalization of Yekutieli's noncommutative local duality theorem~\cite[Theorem 4.18]{YEKUTIELI1_dualizing}. We begin with a bigraded version of Frankild-Iyengar-J\o rgensen's notion of a dualizing dg-module. Before stating it, we recall that, given a dg-algebra $A$ and a dg-$A$-$A$-bimodule $R$, a dg-$A$-module $M$ is called \emph{$R$-reflexive} if the natural map
$$
M \to \RHom_{A^{\op}}(\RHom_A(M, R), R)
$$
is a quasi-isomorphism.
\begin{defn}[cf. \cite{FIJ} Section 1.8]
\label{def:dualizing}
Let $A$ be as in Setup~\ref{setup}. A dg-$A$-$A$-bimodule $R$ is called \emph{dualizing} if, for any $M \in \Db(A^{\op})$ and $N \in \Db(A)$, the following conditions are satisfied:
\begin{enumerate}

\item We have $\RHom_{A^{\op}}(M, R) \in \Db(A)$, and $\RHom_A(N, R) \in \Db(A^{\op})$.
\item The dg-$A$-modules $N$ and $N \otimes_A R$ and dg-$A^{\op}$-modules  $M$ and $M \otimes_{A^{\op}} R$ are $R$-reflexive.

\end{enumerate}
\end{defn}

\begin{remark}
The definition of a dualizing dg-module in \cite[Section 1.8]{FIJ} requires the existence of a biprojective and biinjective resolution of $R$: see \cite[Section 1.2]{FIJ} for the definitions of these objects. But such resolutions always exist in our setting: the existence of biprojective resolutions is standard since our ground ring $\k$ is a field (see e.g. \cite[Proposition 1.3]{FIJ}), and the existence of biinjective resolutions is the content of Proposition~\ref{homotopy_flat}(2). Additionally, the requirement in the definition of a dualizing dg-module in \cite[Section 1.8]{FIJ} that $A$ is $R$-reflexive as both a dg-$A$-module and a dg-$A^{\op}$-module is superfluous in our setting: it follows from (2) in Definition~\ref{def:dualizing}.
\end{remark}

\begin{example}
\label{rem:dualizing}
If $A$ is Gorenstein (Definition~\ref{def:gorenstein}), then $A(i)[j]$ is dualizing for all $i, j \in \Z$.  
\end{example}

Let $A$ be as in Setup~\ref{setup}. If $A$ admits a dualizing dg-module $R$ (Definition~\ref{def:dualizing}), then we have the following duality functors, which are inverse equivalences:
\begin{align*}
D & \c \Db(A) \to \Db(A^{\op})^{\op}, \quad D(M) = \RHom_A(M, R);  \\
D^{\op} & \c \Db(A^{\op})^{\op} \to \Db(A), \quad D^{\op}(M) =\RHom_{A^{\op}}(M, R).
\end{align*}

\begin{thm}[Local Duality, cf. \cite{YEKUTIELI1_dualizing} Theorem 4.18]\label{thm: local_duality}
    Let $A$ be as in Setup~\ref{setup}, and assume $A$ admits a dualizing dg-module $R$. Let $M\in \Db(A)$. There is a canonical isomorphism in $\D(A)$
    \[
    \dGamma_{\m}(M)\cong \RHom_{A^{\op}}(D(M),\dGamma_{\m}(R)).
    \]
    
\end{thm}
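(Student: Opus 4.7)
The plan is to combine the $R$-reflexivity of $M$ with a Hom-Hom swap adjunction. Since $R$ is dualizing and $M \in \Db(A)$, Definition~\ref{def:dualizing}(2) supplies a canonical quasi-isomorphism $M \xra{\sim} \RHom_{A^{\op}}(D(M),R)$. Applying $\dGamma_\m$ and using the defining colimit $\dGamma_\m(X) = \colim_d \RHom_A(A/A_{\ge d}, X)$ from Section~\ref{sec:lc} yields
\[
\dGamma_\m(M) \simeq \colim_d \RHom_A(A/A_{\ge d}, \RHom_{A^{\op}}(D(M),R)).
\]

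Next, I would apply the standard Hom-Hom swap
\[
\RHom_A(A/A_{\ge d}, \RHom_{A^{\op}}(D(M),R)) \simeq \RHom_{A^{\op}}(D(M), \RHom_A(A/A_{\ge d}, R)),
\]
which is valid because $R$ is a dg-$A$-$A$-bimodule: Proposition~\ref{prop:injbimodule} and Lemma~\ref{homotopy_flat}(2) produce a bimodule $K$-injective resolution of $R$ that computes both sides simultaneously, and $A/A_{\ge d}$ is itself an $A$-$A$-bimodule. Passing to the colimit over $d$ gives
\[
\dGamma_\m(M) \simeq \colim_d \RHom_{A^{\op}}(D(M), \RHom_A(A/A_{\ge d}, R)).
\]
The final step is to move the colimit inside $\RHom_{A^{\op}}(D(M), -)$ to identify the right-hand side with $\RHom_{A^{\op}}(D(M), \dGamma_\m(R))$; this target lives in $\D(A)$ thanks to the bimodule structure on $\dGamma_\m(R)$ recorded in Remark~\ref{rem:bimodule}.

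The main technical obstacle is precisely this interchange of $\RHom_{A^{\op}}(D(M), -)$ with the filtered colimit over $d$, which for an arbitrary object of $\D(A^{\op})$ would fail. It works in our setting because $D(M) \in \Db(A^{\op})$, so by Remark~\ref{rem:easy}(1) its total cohomology is finitely generated over $H(A^{\op})$. Combined with the Noetherianity of $A^{\nat}$ from Setup~\ref{setup}, this allows us to replace $D(M)$ by a semi-free resolution whose free generators are finite in each bidegree and, below any prescribed bidegree, lie in finitely many bidegrees. For such a resolution, the computation of $\RHom_{A^{\op}}(D(M), -)$ in any fixed bidegree reduces to finitely many Hom spaces, so it commutes bidegreewise with the filtered colimit $\colim_d$. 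This delivers the desired isomorphism, and chasing naturality through the construction identifies it with the canonical map in the statement.
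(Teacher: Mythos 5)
Your proposal is correct and follows essentially the same route as the paper: the paper's maps $\gamma_n$ (induced by the duality equivalence $D$) unwind to exactly your reflexivity-plus-Hom-swap identification $\RHom_A(A/A_{\ge d},M) \simeq \RHom_{A^{\op}}(D(M),\RHom_A(A/A_{\ge d},R))$, and both arguments then conclude by passing the filtered colimit past $\RHom_{A^{\op}}(D(M),-)$ using $D(M)\in\Db(A^{\op})$. You actually give slightly more detail on this last interchange than the paper, which simply asserts it.
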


See also \cite[Theorem 7.26]{Shaul_injective} for a version of local duality involving differential $\Z$-graded algebras (as opposed to  differential bigraded algebras, which we consider here).

\begin{proof}

    Fix $n \ge 1$. By Remark~\ref{rem:bimodule}, the canonical map $\RR\Gamma_\m(R) \to R$ given by~\eqref{eqn:naturalmap} is a morphism of $A$-$A$-bimodules, and it induces a map
    $\a_n \co \RR\uHom_A(A/A_{\ge n}, \dGamma_{\m}(R)) \to D(A/A_{\ge n})$
    of $A$-$A$-bimodules, which is a quasi-isomorphism due to the adjunction in Proposition~\ref{prop:gen}. We therefore have a morphism $D(A/A_{\ge n}) \to \dGamma_{\m}(R)$ in $\D(A^{\op} \otimes_\k A)$ given by the diagram
\begin{equation}
\label{eq:nmap}
D(A/A_{\ge n}) \xleftarrow[\simeq]{\a_n} \RR\uHom_A(A/A_{\ge n}, \dGamma_{\m}(R)) \to  \dGamma_{\m}(R),
\end{equation}
where the second map is induced by the surjection $A \onto A/A_{\ge n}$. Let $f_n$ denote the composition
$$
\RHom_A(A/A_{\ge n},M) \xra[\simeq]{\gamma_n} \RHom_{A^{\op}}(D(M),D(A/A_{\ge n})) \xra{\beta_n} \RHom_{A^{\op}}(D(M), \dGamma_{\m}(R)),
$$
where $\gamma_n$ is the $A$-linear quasi-isomorphism induced by the equivalence $D$, and $\beta_n$ is induced by \eqref{eq:nmap}. 
Since filtered colimits are exact in $\on{Mod}(A)$, the colimit of the maps $\gamma_n$ is an isomorphism in $\D(A)$. The colimit of the maps $\beta_n$ is also  an isomorphism in $\D(A)$, since $D(M) \in \Db(A^{\op})$. We conclude that the colimit of the maps $f_n$ gives the desired isomorphism in $\D(A)$. 
\end{proof}

\subsection{Balanced dualizing dg-modules}
\label{sec:dualizing}

The following definition is a direct analogue of Yekutieli's notion of a balanced dualizing complex:

\begin{defn}[cf. \cite{YEKUTIELI1_dualizing} Definition 4.1]\label{def:balanced_dualizing} Let $A$ be a dg-algebra and $R$ a dg-$A$-$A$-bimodule. We say $R$ is a \emph{balanced dualizing dg-module} if it is a dualizing dg-module in the sense of Definition~\ref{def:dualizing}, and there are isomorphisms $\RR\Gamma_{\m}(R) \cong \uHom_{\k}(A, \k) \cong \RR\Gamma_{\m^{\op}}(R)$ in $\D(A^{\op} \otimes_\k A)$.
\end{defn}

We record the following consequence of local duality (Theorem~\ref{thm: local_duality}):
\begin{cor}
\label{rem:LD}
If $A$ is as in Setup~\ref{setup}, and $M \in \Db(A)$, then there is an isomorphism $\dGamma_{\m}(M)\cong \RHom_{A^{\op}}(D(M),\uHom_{\k}(A, \k))$ in $\D(A)$.
\end{cor}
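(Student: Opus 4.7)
The plan is to obtain this as an essentially immediate consequence of Local Duality (Theorem~\ref{thm: local_duality}) together with the defining property of a balanced dualizing dg-module (Definition~\ref{def:balanced_dualizing}). First I would apply Theorem~\ref{thm: local_duality} to $M \in \Db(A)$ to get a canonical isomorphism
\[
\dGamma_{\m}(M) \cong \RHom_{A^{\op}}(D(M), \dGamma_{\m}(R))
\]
in $\D(A)$, where the right-hand side acquires its right $A$-module structure from the right $A$-action on $\dGamma_{\m}(R)$ (which is a dg-$A$-$A$-bimodule by Remark~\ref{rem:bimodule}).

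Next I would invoke the balanced hypothesis: by Definition~\ref{def:balanced_dualizing}, there is an isomorphism $\dGamma_{\m}(R) \cong \uHom_{\k}(A, \k)$ in $\D(A^{\op} \otimes_\k A)$. Plugging this isomorphism into the second slot of $\RHom_{A^{\op}}(D(M), -)$ and using that this functor is well-defined on $\D(A^{\op} \otimes_{\k} A)$-valued inputs (again appealing to Proposition~\ref{prop:injbimodule} and Lemma~\ref{homotopy_flat}(2) to produce a resolution that is simultaneously $K$-injective over $A^{\op}$ and preserves the residual right $A$-action), one obtains
\[
\RHom_{A^{\op}}(D(M), \dGamma_{\m}(R)) \cong \RHom_{A^{\op}}(D(M), \uHom_{\k}(A, \k))
\]
as objects of $\D(A)$. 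Composing the two displayed isomorphisms yields the claim.

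The only substantive point to be careful about is that both isomorphisms must be taken in the appropriate bimodule-enriched categories so that the composite lives in $\D(A)$ rather than merely in $\D(\k)$; but this is already built into the formulation of Theorem~\ref{thm: local_duality} and of Definition~\ref{def:balanced_dualizing}, so no additional work is needed. I expect no genuine obstacle here: this is a formal consequence of results already established in the section.
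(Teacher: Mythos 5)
Your proof is correct and matches the paper's intent exactly: the corollary is presented as an immediate consequence of Theorem~\ref{thm: local_duality}, obtained by substituting the balanced isomorphism $\dGamma_{\m}(R)\cong\uHom_{\k}(A,\k)$ in $\D(A^{\op}\otimes_\k A)$ into the second argument of $\RHom_{A^{\op}}(D(M),-)$. Your care about working at the bimodule level so that the composite isomorphism lands in $\D(A)$ is exactly the right point to flag, and no further argument is needed.
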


When $A$ is as in Setup~\ref{setup}, and $A = A^0$, our definition agrees with Yekutieli's definition of a balanced dualizing complex~\cite[Definition 4.1]{YEKUTIELI1_dualizing}. In this case, Yekutieli proves that balanced dualizing complexes exist for AS-regular algebras~\cite[Corollary 4.14]{YEKUTIELI1_dualizing} (in fact, the proof of this statement only requires that the algebra is Gorenstein, in the sense of Definition~\ref{def:gorenstein}), finitely generated $\k$-algebras that are finitely generated over their centers~\cite[Corollary 5.6]{{YEKUTIELI1_dualizing}}, and skew homogeneous coordinate rings~\cite[Theorem 7.3]{YEKUTIELI1_dualizing}. We now verify the existence of a balanced dualizing dg-module in some examples.

\begin{prop}\label{lem:fd_balanced}
    Let $A$ be as in Setup~\ref{setup}, and assume $\dim_\k H(A) < \infty$. The dg-$A$-$A$-bimodule $\uHom_\k(A,\k)$ is a balanced dualizing dg-module for $A$.
\end{prop}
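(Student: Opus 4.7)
The plan is to verify the two ingredients of Definition~\ref{def:balanced_dualizing} for $R \ce \uHom_\k(A,\k)$: that $R$ is dualizing in the sense of Definition~\ref{def:dualizing}, and that it is balanced, i.e., $\dGamma_\m(R) \cong R \cong \RR\Gamma_{\m^{\op}}(R)$ as bimodules. The central technical input, obtained from tensor--hom adjunction together with the exactness of $\uHom_\k(-,\k)$ (since $\k$ is a field), is the identification
\[
\RHom_A(-,R) \simeq \uHom_\k(-,\k), \qquad \RHom_{A^{\op}}(-,R) \simeq \uHom_\k(-,\k),
\]
as functors on $\D(A)$ and $\D(A^{\op})$ respectively. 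In particular, $H(R) \cong H(A)^{*}$ has the same finite $\k$-dimension as $H(A)$.

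For the balanced condition, the plan is to show that $R$ lies in both $\D^{\Tors}(A)$ and $\D^{\Tors}(A^{\op})$. Since $A$ is connected, $H^{0}(A)$ is a local artinian $\k$-algebra, and so $H^{0}(A)_{\ge n}=0$ for $n \gg 0$; combined with $\dim_{\k} H(R) < \infty$, this forces every cohomology class of $R$ to be annihilated by a large power of the maximal ideal. Proposition~\ref{prop:gen}, upgraded to bimodules via Remark~\ref{rem:bimodule}, then implies that the counit maps $\dGamma_\m(R) \to R$ and $\RR\Gamma_{\m^{\op}}(R) \to R$ are isomorphisms in $\D(A^{\op}\otimes_{\k}A)$, yielding the required balanced identifications.

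For the dualizing condition, condition~(1) of Definition~\ref{def:dualizing} is immediate from the displayed adjunction: for $N \in \Db(A)$, $\RHom_{A}(N,R) \simeq \uHom_{\k}(N,\k)$ has cohomology $H(N)^{*}$, of the same finite $\k$-dimension as $H(N)$, and symmetrically on the $A^{\op}$-side. For condition~(2), iterating the adjunction converts the $R$-biduality map for any $V \in \D(A)$ into the canonical double-$\k$-dualization map $V \to \uHom_{\k}(\uHom_{\k}(V,\k),\k)$, which is a quasi-isomorphism if and only if $H^{j}(V)_{i}$ is finite-dimensional in every bidegree $(i,j)$. For $V = N \in \Db(A)$ (and symmetrically $V = M \in \Db(A^{\op})$), this is recorded in Remark~\ref{rem:easy}(2).

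The main obstacle is the analogous bidegree-wise finiteness for $V = N \otimes_A^{\LL} R$ (and symmetrically $M \otimes_{A^{\op}}^{\LL} R$), since $V$ may fail to lie in $\Db(A)$. The plan is to deduce this from the convergent spectral sequence
\[
E_2^{p,q} = \Tor_{-p}^{H(A)}(H(N), H(R))^{q} \Longrightarrow H^{p+q}(N \otimes_{A}^{\LL} R),
\]
whose $E_2$-terms are bidegree-wise finite-dimensional because $H(A)$, $H(N)$, and $H(R)$ are each finite-dimensional over $\k$, and for which convergence in each fixed bidegree follows from the boundedness built into Setup~\ref{setup}. Alternatively, one may choose a minimal semi-free resolution $P \to N$ and use that $R$ is bidegree-wise finite-dimensional with support bounded above internally and below cohomologically, so that each bidegree of $P \otimes_{A} R$ receives contributions from only finitely many semi-free generators. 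Once this bidegree-wise finiteness is secured, reflexivity of $N \otimes_A^{\LL} R$ follows formally from the double-$\k$-dualization argument.
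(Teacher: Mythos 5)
Your argument is correct and follows essentially the same route as the paper's: the tensor--hom adjunction identifying $\RHom_A(-,R)$ with $\uHom_\k(-,\k)$, reduction of $R$-reflexivity to the $\k$-linear double-dualization map, and the observation that the counit $\dGamma_\m(R) \to R$ is a bimodule quasi-isomorphism because $\dim_\k H(R) < \infty$. You handle the reflexivity of $N \otimes_A^{\LL} R$ and $M \otimes_{A^{\op}}^{\LL} R$ more carefully than the paper, which disposes of it with a brief appeal to $R \in \Db(A^{\op} \otimes_\k A)$; you correctly isolate bidegree-wise finiteness of the cohomology of these tensor products (which need not lie in $\Db(A)$) as the actual input needed for the double-dual argument, and supply a justification.
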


\begin{proof}
Set $R \ce \uHom_\k(A,\k)$, and let $M\in \Db(A^{\op})$ and $N\in \Db(A)$. It follows by adjunction that $\RHom_{A^{\op}}(M,R)$ has finitely generated cohomology over $H^0(A)$, and so it is an object in $\Db(A)$; similarly, $\RHom_A(N,R) \in \Db(A^{\op})$. The natural map $N\to \RHom_{A^{\op}}(\RHom_A(N,R),R)$ in $\D(A)$ is the composition of the isomorphism $N \xra{\cong} \RHom_\k(\RHom_\k(N,\k),\k)$ and the adjunction isomorphism given by $\RHom_\k(\RHom_\k(N,\k),\k)\xra{\cong} \RHom_{A^{\op}}(\RHom_A(N,R),R)$; thus $N$ is $R$-reflexive. One similarly verifies that $M$ is $R$-reflexive. This proof of reflexivity does not require $N$ to be in $\Db(A)$, but only that $\dim_\k H^j(N)_i < \infty$ for all $i, j$ (and similarly for $M$). Thus, $N \otimes_A R$ and $M \otimes_{A^{\op}} R$ are also $R$-reflexive, and so $\uHom_\k(A,\k)$ is a dualizing dg-module for $A$. Finally, the counit maps $\RR\Gamma_{\m}(R) \to R$ and $\RR\Gamma_{\m^{\op}}(R) \to R$ are both morphisms of dg-$A$-$A$-bimodules, and they are isomorphisms in $\D(A^{\op} \otimes_\k A)$ since $\dim_\k H(R) < \infty$. 
\end{proof}

\begin{ex}
\label{ex:jin}
    If $A$ is a (possibly noncommutative) trivial extension dg-algebra, as defined in \cite[Section 6]{JIN2020}, then $A$ satisfies the conditions in Setup~\ref{setup}, and $\dim_\kk H(A) < \infty$. Proposition~\ref{lem:fd_balanced} therefore implies that $\uHom_\k(A, \k)$ is a balanced dualizing dg-module for $A$. 
\end{ex}

\begin{remark}
\label{rem:fd}
Suppose $A$ is as in Proposition~\ref{lem:fd_balanced}, and assume $A$ is also Gorenstein. Choose $a, n \in \Z$ such that there is an isomorphism $\RHom_A(\k, A) \cong \k(a)[-n]$ in $\D(A)$. We have an isomorphism of dg-$A$-$A$-bimodules:
$$
\uHom_\k(A, \k) \cong \uHom_\k(A, \RHom_A(\k, A))(-a)[n] \cong A(-a)[n]. 
$$
\end{remark}

\begin{thm}[cf. \cite{YEKUTIELI1_dualizing} Proposition 4.4]
\label{prop:balanced}
Suppose $A$ is graded commutative and Gorenstein (see Definition~\ref{def:gorenstein}). Choose $a, n \in \Z$ such that $\RHom_A(\k, A) \cong \k(a)[-n]$ in $\D(A)$. The dg-$A$-$A$-bimodule $R \ce A(-a)[n]$ is a balanced dualizing dg-module.
\end{thm}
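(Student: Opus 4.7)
The plan is as follows. Since $A$ is Gorenstein, Example~\ref{rem:dualizing} shows that $R = A(-a)[n]$ is a dualizing dg-$A$-$A$-bimodule. The graded commutativity of $A$ yields an isomorphism $A \cong A^{\op}$ of dg-algebras via the standard sign twist, identifying $\m^{\op}$ with $\m$; under this identification it suffices to exhibit a single isomorphism $\RR\Gamma_\m(R) \cong \uHom_\k(A,\k)$ of dg-$A$-$A$-bimodules. After shifting by $(a)[-n]$, this reduces to proving
\[
\RR\Gamma_\m(A) \cong \uHom_\k(A,\k)(a)[-n]
\]
in $\D(A^{\op} \otimes_\k A)$---the dg analog of the classical formula $H^n_\m(A) \cong E(\k)(a)$ for connected Gorenstein graded algebras. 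Note that in the special case $\dim_\k H(A) < \infty$ this is consistent with Remark~\ref{rem:fd}, since there $\RR\Gamma_\m(A) = A \cong \uHom_\k(A,\k)(a)[-n]$.

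The principal tool will be Minamoto's theory of minimal injective resolutions from Section~\ref{sec:minamoto}. First I would observe that $\uHom_\k(A,\k)$ lies in $\cI$: since $A^0$ is a connected graded commutative $\k$-algebra, the graded injective hull of $\k$ as an $A^0$-module is $E \ce \uHom_\k(A^0,\k)$, and tensor-hom adjunction yields
\[
\uHom_{A^0}(A,E) = \uHom_{A^0}(A,\uHom_\k(A^0,\k)) \cong \uHom_\k(A,\k).
\]
Under the equivalence $H^0 \co \cI \xra{\simeq} \Inj(H^0(A))$ of Theorem~\ref{thm:minamoto}(2), this object corresponds to the graded injective hull of $\k$ over $H^0(A)$. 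Next, I would build a minimal injective resolution $\{[M_i \to I_i \to M_{i-1} \to]\}_{i \le 0}$ of $A$ with $M_0 = A$ as in Definition~\ref{def:mininj}. The Gorenstein hypothesis $\RHom_A(\k,A) \cong \k(a)[-n]$---i.e. the vanishing of $\uExt^j_A(\k,A)$ for $j \ne n$ and its value at $j=n$---combined with the formula $\Hom_{\D(A)}(\k,I_i) \cong \Hom_{H^0(A)}(\k,H^0(I_i))$ from Theorem~\ref{thm:minamoto}(1), pins down each $I_i$ inductively. The outcome should be that all but one of the $I_i$ have trivial $\m$-torsion, while the unique surviving term is an appropriate shift of $\uHom_\k(A,\k)$ dictated by the parameter $(a,n)$; applying $\RR\Gamma_\m$ to the tower of triangles then collapses it to yield the desired isomorphism.

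The main obstacle is twofold. First, one must carefully propagate the bigraded indexing through Minamoto's construction in the dg setting---tracking both cohomological and internal degrees throughout the iterative formation of the $M_i$ and $I_i$---in order to identify the sole surviving term together with its correct internal and cohomological shifts. Second, and more subtly, the resulting isomorphism must be upgraded from $\D(A)$ to $\D(A^{\op} \otimes_\k A)$. This is where graded commutativity enters crucially: Minamoto's construction is intrinsically one-sided, producing only a dg-$A$-module resolution of $A$, and it is only through the sign-twisted identification $A \cong A^{\op}$ that one can equip each $I_i$ and each $M_i$ with a compatible bimodule structure matching the canonical bimodule structures on $A$ and $\uHom_\k(A,\k)$. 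Without this compatibility there is no canonical way to read off a bimodule isomorphism from the resolution, which is why the graded commutativity hypothesis cannot be easily removed from the argument.
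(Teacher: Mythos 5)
Your proposal follows the same strategy as the paper's proof: reduce to exhibiting an isomorphism $\RR\Gamma_\m(R) \cong \uHom_\k(A,\k)$, observe that $\uHom_\k(A,\k) \cong \uHom_{A^0}(A,\uHom_\k(A^0,\k)) \in \cI$, and analyze a Minamoto-style minimal injective resolution using Theorem~\ref{thm:minamoto}. The broad outline is right, but several of the steps you treat as routine are exactly where the proof's work lies, and one claim is imprecise enough to constitute a gap.

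First, the assertion that the Gorenstein hypothesis together with Theorem~\ref{thm:minamoto}(1) ``pins down each $I_i$ inductively'' overstates what that mechanism delivers. Theorem~\ref{thm:minamoto}(1) controls $\Hom_{\D(A)}(\k,I_i)$, i.e.\ the socle data of $H^0(I_i)$, not $I_i$ itself. The paper instead invokes a precise comparison (its citation is to Minamoto's Corollary 3.29) giving $\Hom_{\D(A)}(\k,I_i(q)[j]) \cong \Hom_{\D(A)}(\k,R(q)[j-i])$; the Gorenstein hypothesis then shows this vanishes unless $q=0$ and $i=j=\inf(R)$. Your sketch has no substitute for this identification, and without it there is no way to isolate the sole surviving term or its precise bidegree. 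Moreover, the paper first needs $\id_A(R)=\dim H^0(A)<\infty$ (again Minamoto) to know the resolution terminates, and it must convert the tower of triangles $\{[M_i\to I_i\to M_{i-1}\to]\}$ into an actual $K$-injective replacement $J$ of $R$ before $\Gamma_\m(J)$ computes $\RR\Gamma_\m(R)$; ``applying $\RR\Gamma_\m$ to the tower'' elides this assembly step, which requires $\id_A(M_{-e})=0$ and the fact that objects of $\cI$ are $K$-injective.

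Second, the step ``all but one of the $I_i$ have trivial $\m$-torsion'' is true but needs a reason you have not supplied: since each $I_i$ is injective over $A^\nat$, the module $\Gamma_\m(I_i)$ is again injective and essential over $\lhom_A(\k,I_i)$, so $\lhom_A(\k,I_i)=0$ forces $\Gamma_\m(I_i)=0$. Likewise the identification $\Gamma_\m(I_j)\cong\lhom_\k(A,\k)$ goes through the fact that $\lhom_\k(A,\k)$ is an injective envelope of $\k$ in $\Mod(A)$, which you correctly observe but do not connect to the conclusion via essentiality. Finally, your diagnosis of where graded commutativity enters is incomplete: it is not only needed to upgrade from $\D(A)$ to $\D(A^{\op}\otimes_\k A)$; as Remark~\ref{rem:comm} notes, Minamoto's structural results themselves (finiteness of injective dimension, behavior of Bass numbers, Lemma 3.22, Lemma 3.17, Corollary 3.29 in his papers) are proved only in the graded commutative setting and are used repeatedly in the argument. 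The bimodule upgrade is the easier of the two obstructions.
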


\begin{proof}
As observed in \Cref{rem:dualizing}, $R$ is a dualizing dg-module for $A$. We refer the reader to \cite[Definition 3.1]{minamoto} for the notion of injective dimension of a dg-module $N$ (which is due to Yekutieli~\cite{yekdual}). Given a dg-$A$-module $N$, we denote its injective dimension by $\id_A(N)$. By \cite[Theorem 3.9]{Minamoto2}\footnote{Throughout this proof, we reference results in \cite{Minamoto2} and \cite{minamoto}, although Minamoto does not work with dg-algebras with an internal grading. However, the bigraded versions of the referenced results hold verbatim, by the same proofs.} , the injective dimension of $A$ over itself is the Krull dimension of $H^0(A)$. We thus have $\id_A(R) = \id_A(A) = \dim H^0(A) <\infty$. Given a homogeneous prime ideal $\mathfrak{p}$ in $H^0(A)$ and a dg-$A$-module $N$, there is a notion of Bass numbers $\mu^n(\mathfrak{p}, N)$ due to Minamoto; we refer to \cite[Section 2.3.2]{Minamoto2} for the definition. Combining \cite[Lemma 2.26]{Minamoto2} and \cite[Proposition 2.29]{Minamoto2}, we have that $\mu^n(\mathfrak{p}, R) = 0$ for $n \gg 0$. It therefore follows from \cite[Theorem 1.1]{Minamoto2} and Remark~\ref{rem:inf} that $M$ admits a minimal injective resolution $\left\{
\left[M_i \xra{f_i} I_i \xra{g_i} M_{i - 1} \to \right]
 \right\}_{i \le 0}$, as in Definition~\ref{def:mininj}, such that $I_{-i} = 0$ for $i \gg 0$. Let $e \ce \max\{i \text{ : } I_{-i} \ne 0\}$.

Our next goal is---roughly speaking---to show that $R$ admits a $K$-injective resolution $J$ built out of the above minimal injective resolution of $R$.
Since the dg-module $R$ satisfies the conditions in \cite[Definition 3.1]{Minamoto2}, it follows from \cite[Theorem 3.5]{Minamoto2} that $e = \id_A(R)=\dim H^0(A)$. We observe that, by \cite[Theorem 3.5]{Minamoto2} and \cite[Definition 3.1]{minamoto}, we have $\inf(M_{-i})=\inf(I_{-i})=\inf(R)$ for all $i=0,\dots,e$. If $\id_A(R)=0$, then $\dim H^0(A)=0$, and hence $\dim_{\kk}H(A)<\infty$; the statement holds in this case by Proposition~\ref{lem:fd_balanced} and \Cref{rem:fd}. Assume $\id_A(R)\geq 1$. Applying \cite[Lemma 3.22]{minamoto} and induction, we have that $\id_A(M_{-e})=0$. By \cite[Lemma 3.17]{minamoto}, we conclude that there is an isomorphism $M_{-e} \cong I[l]$ in $\D(A)$ for some $I\in \mathcal{I}$ and $\ell \in \Z$. By Remark~\ref{rem:semiinj}, $I[l]$ is $K$-injective. Moreover, it follows from \cite[Theorem 3.10(4)]{minamoto} that $M_{-e}\to I_{-e}$ is an isomorphism in $\D(A)$, and hence we may assume that $I[l]=I_e$. Using the triangle $M_{-e+1}\to I_{-e+1}\to M_{-e} \to $, one may construct a $K$-injective resolution of $M_{-e+1}$ from $I_{-e+1}$ and $I[l]$. Continuing inductively gives a $K$-injective resolution $R \xra{\simeq} J$ that is built from the data of our minimal injective resolution of $R$. 

We have isomorphisms $\kk \cong \RHom_A(\kk,R) \cong \lhom_A(\kk,J)$ in $\D(A)$. For all $i\leq 0$, we have isomorphisms $I_i \cong \lhom_{A^0}(A,K_i)[-\inf(R)]$ in $\D(A)$ for some $K_i\in \Inj(A^0)$. It therefore follows from adjunction that, for all $i\leq 0$, we have $\lhom_A(\kk,I_i)\cong \lhom_{A^0}(\kk,K_i)[-\inf(R)]$ as bigraded complexes of vector spaces. In particular, $\lhom_A(\kk,I_i)$ lives in the single cohomological degree $j \ce \inf(R)$. For all $i, q \in \mathbb{Z}$, we thus have $\lhom_A(\kk,I_i)_q \cong \Hom_{\D(A)}(\kk,I_i(q)[j])$. \Cref{thm:minamoto}(1) and \cite[Corollary 3.29]{minamoto} imply that $\Hom_{\D(A)}(\kk,I_i(q)[j]) \cong \Hom_{H^0(A)}(\kk, H^0\lhom_{A^0}(A,K_i(q))) 
                                    \cong \Hom_{\D(A)}(\kk, R(q)[j-i])$.
This last object is zero unless $q=0$ and $i=j$. If $\lhom_A(\kk,I_i)=0$ for all $i$, it follows that $\lhom_A(\kk,J)=0$, a contradiction. Thus, we have $-e \le j \le 0$, and  $\Hom_A(\kk, J) \cong \lhom_A(\kk,I_j)\cong \kk$ in $\Mod(A)$. 

It suffices to exhibit an isomorphism $\Gamma_{\m}(J)\ce \underset{d \to \infty}{\colim}\lhom_A(A/A_{\geq d},J)\cong \lhom_\k(A,\k)$ as dg-$A$-modules. We note that $\lhom_{\kk}(A,\kk)$ is an essential extension of $\kk$ (see \cite[Section 3.10]{popescu} for the definition) in $\Mod(A^{\nat})$ and hence also in $\Mod(A)$. By Remark~\ref{rem:semiinj}, $\lhom_{\kk}(A,\kk)\cong \lhom_{A^0}(A, \lhom_{\kk}(A^0,\kk))\in \mathcal{I}$ is an injective object in $\Mod(A)$. By \cite[Lemmas 10.5 and 10.6]{popescu}, $\lhom_{\kk}(A,\kk)$ has no proper essential extensions in $\Mod(A)$ and is hence an injective envelope for $\kk$ in $\Mod(A)$. The underlying bigraded module of $\Gamma_{\m}(J)$ coincides with the $\m$-torsion submodule of $J$ over $A^{\nat}$; thus, $\Gamma_{\m}(J)$ is essential over $\lhom_A(\kk,J)\cong \kk$ in $\Mod(A)$. By \cite[Proposition 10.7 and Lemma 10.8]{popescu}, there is an injective morphism $f \co \Gamma_{\m}(J)\rightarrow \lhom_\k(A,\k)$ in $\Mod(A)$. Since each $I_i$ is a bigraded injective $A^{\nat}$-module, so is $\Gamma_{\m}(I_j)$. For each $i\neq j$, it follows that $\Gamma_{\m}(I_i)=0$, since it is essential over $\lhom_A(\kk,I_i)=0$. We therefore have isomorphisms $\Gamma_{\m}(J)\cong \Gamma_{\m}(I_j)\cong E_{A^{\nat}}(\kk)\cong \lhom_{\kk}(A,\kk)$ in $\Mod(A^{\nat})$. Since each bigraded component of $\lhom_{\kk}(A,\kk)$, and hence $\Gamma_{\m}(J)$, is a finite dimensional $\k$-vector space, it follows that $f \co \Gamma_{\m}(J)\to \lhom_{\kk}(A,\kk)$ is an isomorphism in $\Mod(A)$.
\end{proof}

\begin{remark}
Let $A$ and $n$ be as in Theorem~\ref{prop:balanced}. The inequality $j \ce \inf(R) \le 0$ in the proof of Theorem~\ref{prop:balanced} implies that $n \ge \inf(A)$. 
\end{remark}

\begin{ex}
\label{ex:bal1}
 Theorem~\ref{prop:balanced} applies to each of the families of dg-algebras in \cite[Examples 2.23 - 2.26]{brown2023orlovs}. For instance, Theorem~\ref{prop:balanced} applies to the generalized Koszul complexes described in \cite[Example 2.23]{brown2023orlovs} and originally studied by Shaul in \cite{Shaul_Koszul}.
\end{ex}

\begin{remark}
\label{rem:comm}
    Our proof of \Cref{prop:balanced} uses in an essential way Minamoto's structural results for minimal injective resolutions developed in~\cite{Minamoto2}. Since those results require graded commutativity, our argument does as well. A suitable theory of minimal injective resolutions for noncommutative dg-algebras 
    therefore seems to be needed to prove Theorem~\ref{prop:balanced} without the commutativity assumption. 
\end{remark}

We conjecture that the graded commutativity hypothesis can be removed from Theorem~\ref{prop:balanced}. 

\begin{conj}
\label{conj:balanced}
Let $A$ be a Gorenstein dg-algebra (see Definition~\ref{def:gorenstein}). Choose $a, n \in \Z$ such that there is an isomorphism $\RHom_A(\k, A) \cong \k(a)[-n]$ in $\D(A)$. The dg-algebra $A$ admits a balanced dualizing dg-module~$R$ that is isomorphic in $\D(A)$ to $A(-a)[n]$.
\end{conj}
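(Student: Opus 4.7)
The plan is to adapt the strategy of \Cref{prop:balanced} to the noncommutative setting. Since $A$ is Gorenstein, \Cref{rem:dualizing} already gives that $R = A(-a)[n]$ is a dualizing dg-module, so it remains only to verify the two balanced isomorphisms $\RR\Gamma_\m(R) \cong \uHom_\k(A,\k) \cong \RR\Gamma_{\m^{\op}}(R)$ in $\D(A^{\op} \otimes_\k A)$.

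First I would apply \Cref{thm: local_duality} with $M = \k$. The Gorenstein identification $\RHom_A(\k,A) \cong \k(a)[-n]$ gives $\RHom_A(\k, R) \cong \k$, and since $\k$ is torsion we have $\RR\Gamma_\m(\k) \cong \k$. Local Duality therefore yields $\RHom_{A^{\op}}(\k, \RR\Gamma_\m(R)) \cong \k$, which should be thought of as identifying the ``derived $\m^{\op}$-socle'' of $\RR\Gamma_\m(R)$ with $\k$. A symmetric argument---which would first require establishing a two-sided Gorenstein identification $\RHom_{A^{\op}}(\k, A) \cong \k(a)[-n]$ in $\D(A^{\op})$ together with an $A^{\op}$-analog of \Cref{thm: local_duality}---would yield the analogous socle calculation for $\RR\Gamma_{\m^{\op}}(R)$.

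Next I would produce natural comparison morphisms of dg-$A$-$A$-bimodules from $\RR\Gamma_\m(R)$ and $\RR\Gamma_{\m^{\op}}(R)$ to $\uHom_\k(A,\k)$, using the universal property $\RHom_A(M, \uHom_\k(A,\k)) \cong \uHom_\k(M,\k)$ arising from Hom-tensor adjunction, combined with the socle identifications of the preceding paragraph. The main step would then be to show these morphisms are quasi-isomorphisms. Since $\D^{\tors}(A)$ is generated by the shifts $\k(i)$ by \cite[Proposition 3.1]{brown2023orlovs}, it would suffice to check that the comparison morphisms induce bijections on $\Hom_{\D(A)}(\k(i)[j], -)$ for all $i, j \in \Z$, and then bootstrap along the internal degree filtration $A / A_{\geq d}$.

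The main obstacle, as flagged in \Cref{rem:comm}, is precisely this bootstrapping step. In the graded commutative case, the key identification $\Gamma_\m(J) \cong \uHom_\k(A,\k)$ is carried out inside a concrete $K$-injective resolution $J$ of $R$ built from Minamoto's minimal injective resolution theory \cite{Minamoto2, minamoto}. A proof of \Cref{conj:balanced} would therefore seem to require either a noncommutative generalization of that framework---perhaps via a two-sided analogue of $\lhom_{A^0}(A, K)$ that respects the bimodule structure---or a dg-enhancement of Van den Bergh's $\chi$-condition machinery, which produces balanced dualizing complexes for AS-Gorenstein algebras in the graded (non-dg) case without recourse to minimal resolutions. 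Either foundational extension appears substantial, but the computation above confirms that $A(-a)[n]$ is the correct candidate.
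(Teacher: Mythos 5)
This statement is labeled as a \emph{conjecture} in the paper, and the paper offers no proof of it---only some evidence and an explicit acknowledgment (in the paragraph following the conjecture and in Remark~\ref{rem:comm}) that the essential difficulty is producing the quasi-isomorphism $\RR\Gamma_\m(R) \cong \uHom_\k(A,\k)$ as a morphism of $A$-$A$-bimodules, for which Minamoto's minimal injective resolution machinery (used in Theorem~\ref{prop:balanced}) has no noncommutative counterpart. Your proposal correctly recognizes this: you give a sketch, not a proof, and you arrive at exactly the same obstruction, so in that sense you are in agreement with the paper's assessment of the problem's status.

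Where your sketch differs in detail from the paper's ``evidence'' paragraph is in how you justify that $\RR\Gamma_\m(R)$ has the right $\k$-linear content. The paper works directly with the internal-degree filtration, applying $\RHom_A(-,A)$ to the short exact sequences $0 \to A_{\ge d}/A_{\ge d+1} \to A/A_{\ge d+1} \to A/A_{\ge d} \to 0$ and inducting to compute $\uExt^*_A(A/A_{\ge d},A)$ degree by degree. You instead apply Local Duality (Theorem~\ref{thm: local_duality}) with $M = \k$ to extract the derived socle $\RHom_{A^{\op}}(\k, \RR\Gamma_\m(R)) \cong \k$; this is a valid and perhaps cleaner computation, and the two-sided Gorenstein identification $\RHom_{A^{\op}}(\k,A) \cong \k(a)[-n]$ you flag as needing verification does in fact follow from conditions (2) and (3) of Definition~\ref{def:gorenstein} together with the bimodule identification $\RHom_A(\k,A) \cong \k(a)[-n]$ in $\D(A\otimes_\k A^{\op})$ noted in the proof of Proposition~\ref{prop: Gorenstein_chi}. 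However, the socle computation alone does not pin down $\RR\Gamma_\m(R)$ as a bimodule, and your ``bootstrapping along the internal degree filtration'' is precisely the step that one does not know how to carry out equivariantly; your own final paragraph, pointing to either a noncommutative generalization of Minamoto's resolutions or a dg-enhancement of Van den Bergh's $\chi$-condition approach, is consistent with the paper's Remark~\ref{rem:comm}. In short: there is a genuine gap, but it is the one the authors themselves have left open, and your diagnosis of it is accurate.
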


A positive answer to Conjecture~\ref{conj:balanced} would give a generalization of Yekutieli's result in \cite{YEKUTIELI1_dualizing} that noncommutative Gorenstein $\k$-algebras admit balanced dualizing complexes. As evidence for Conjecture~\ref{conj:balanced}, apart from  Proposition~\ref{lem:fd_balanced} and Theorem~\ref{prop:balanced} (along with Remark~\ref{rem:fd}), we observe that, if $A$ and $R$ are as in Conjecture~\ref{conj:balanced}, then $\RR\Gamma_\m(R)$ and $\uHom_\k(A, \k)$ are quasi-isomorphic complexes of $\k$-vector spaces. Indeed, recall that there is an isomorphism $\RHom_A(\k, R) \cong \kk$ in $\D(A)$. Applying $\RHom_A( -, A)$ to the short exact sequence
$
0 \to A_{\ge 1} / A_{\ge 2} \to A / A_{\ge 2} \to \kk \to 0
$
yields that $\uExt^*_A(A / A_{\ge 2}, A)$ is $\kk$-linearly isomorphic to $H(\uHom_\kk(A, \k))_0 \oplus H(\uHom_\kk(A, \k))_{-1}$. Continuing in this way, we conclude by induction that $\uExt^*_A(A / A_{\ge d}, A)$ is isomorphic as a bigraded $\k$-vector space to $\bigoplus_{i = 0}^{d - 1} H(\uHom_\kk(A, \k))_{-i}$, and passing to the colimit gives the desired $\k$-linear quasi-isomorphism. The challenge is exhibiting a quasi-isomorphism between $\RR\Gamma_\m(R)$ and $\uHom_\k(A, \k)$ as dg-$A$-modules.

\section{Serre duality}
\label{sec:serreduality}

We now prove our main result, Theorem~\ref{thm: Serre_duality}, which is a generalization to dg-algebras of Yekutieli-Zhang's noncommutative Serre Duality Theorem (Theorem~\ref{thm:intro1}) 

\subsection{Noncommutative geometry over a dg-algebra}
\label{sec:qgr}

Let $A$ be as in Setup~\ref{setup}. As in the introduction (see also \cite[Section 3]{brown2023orlovs}), we let $\D_{\mathrm{Qgr}}(A)$ denote the quotient $\D(A)/\D^{\Tors}(A)$. We also let $\Dqgr(A)$ denote the essential image of the fully faithful embedding $\Db(A)/\D^{\tors}(A)\hookrightarrow \D_{\mathrm{Qgr}}(A)$. We recall from the introduction that, given $M \in \D(A)$, we let $\widetilde{M}$ denote the corresponding object in $\D_{\mathrm{Qgr}}(A)$. We have canonical triangulated functors
$$
\Pi : \D(A) \to \D_{\mathrm{Qgr}}(A) \quad \text{and}  \quad \pi : \Db(A) \to \Dqgr(A)
$$
given by $M \mapsto \widetilde{M}$. If $A = A^0$, and $A$ is commutative and standard graded, then $\D_{\mathrm{Qgr}}(A)$ (resp. $\Dqgr(A)$) is equivalent to the derived category of quasi-coherent (resp. coherent) sheaves on $\Proj(A)$. 

\begin{remark}
Suppose $A$ is as in Setup~\ref{setup}; in addition, assume $A^0$ is commutative and generated in degree~1, and also that $A$ is finitely generated as an $A^0$-module. In this case, $A$ determines a sheaf of dg-algebras $\mathcal{A}$ on $\Proj(A^0)$, and $\D(A)$ (resp. $\Db(A)$) may be interpreted as the derived category of quasi-coherent (resp. coherent) dg-$\A$-modules: see \cite[Section 3.2]{brown2023orlovs} for details. 
\end{remark}

The functor $\Pi : \D(A) \to \D_{\mathrm{Qgr}}(A)$ admits a fully faithful right adjoint~\cite[Proposition 3.3]{brown2023orlovs}, the \emph{derived global sections functor}, which we denote by $\RR\Gamma_*$.

\begin{prop}\label{prop:derived_glsections_description}
    If $A$ is as in Setup~\ref{setup}, and $M \in \D(A)$, then there is an isomorphism $\RR\Gamma_*(\widetilde{M})\cong \underset{d \to \infty}{\colim} \text{ }\RHom_A(A_{\ge d}, M )$ in $\D(A)$.
\end{prop}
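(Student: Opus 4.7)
The plan is to build a distinguished triangle
$\RR\Gamma_\m(M) \to M \to N \to$
in $\D(A)$, where $N \ce \colim_{d \to \infty} \RHom_A(A_{\ge d}, M)$, and then identify $N$ with $\RR\Gamma_*(\tM)$ by checking the two properties that characterize the essential image of the fully faithful right adjoint $\RR\Gamma_*$.

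First I would produce the triangle. For each $d \ge 0$, the short exact sequence $0 \to A_{\ge d} \to A \to A/A_{\ge d} \to 0$ determines a distinguished triangle in $\D(A)$; applying $\RHom_A(-,M)$ yields a triangle $\RHom_A(A/A_{\ge d}, M) \to M \to \RHom_A(A_{\ge d}, M) \to$. Working at the chain level with a fixed $K$-injective resolution of $M$, these assemble into a filtered system of short exact sequences of complexes, and exactness of filtered colimits in $\Mod(A)$ produces a distinguished triangle
$$\RR\Gamma_\m(M) \xrightarrow{\epsilon_M} M \to N \to$$
in $\D(A)$, whose first map is the counit of the adjunction from Proposition~\ref{prop:gen} (cf. Remark~\ref{rem:counit}).

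Next I would show that (i) $N$ lies in the essential image of $\RR\Gamma_*$, and (ii) the map $M \to N$ becomes an isomorphism under $\Pi$. Together these imply $N \cong \RR\Gamma_*(\Pi(N)) \cong \RR\Gamma_*(\tM)$. Property (ii) is immediate, since the cone of $M \to N$ is $\RR\Gamma_\m(M)[1] \in \D^{\Tors}(A)$, which is killed by $\Pi$. For (i), the essential image of $\RR\Gamma_*$ coincides with the right-orthogonal to $\D^{\Tors}(A)$, so it suffices to show $\RR\Gamma_\m(N) = 0$. Applying $\RR\Gamma_\m$ to the triangle reduces this to showing that $\RR\Gamma_\m(\epsilon_M)$ is an isomorphism, which is purely formal: since the inclusion $\iota \co \D^{\Tors}(A) \into \D(A)$ is fully faithful, the unit $\eta \co \mathrm{id} \to \RR\Gamma_\m \circ \iota$ is an isomorphism, and the triangle identity $\RR\Gamma_\m(\epsilon_M) \circ \eta_{\RR\Gamma_\m(M)} = \mathrm{id}$ forces the claim.

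The only non-formal step is the colimit passage in the first paragraph: one must verify that the chain-level filtered colimit really models the intended derived-category triangle, and that its leftmost term recovers $\RR\Gamma_\m(M)$ as defined. I expect this to be straightforward, but to be the only place in the argument requiring genuine care; the remainder is adjunction yoga.
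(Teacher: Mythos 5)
Your proof is correct, and it takes a route that is close in spirit to the paper's but structured differently. The paper invokes (the proof of) Krause's Proposition~4.9.1 to obtain the triangle $\RR\Gamma_\m(M) \to M \to \RR\Gamma_*(\widetilde{M}) \to$ directly, then builds the colimit triangle from the short exact sequences $0 \to A_{\ge d} \to A \to A/A_{\ge d} \to 0$, and finally uses Remark~\ref{rem:counit} to identify the two first maps, hence the two third terms. You instead build only the colimit triangle and then \emph{prove} that its third term $N$ lies in the essential image of $\RR\Gamma_*$, by (i) checking $\RR\Gamma_\m(N)=0$ via the triangle identity argument showing $\RR\Gamma_\m(\epsilon_M)$ is an isomorphism, and (ii) noting $\Pi$ kills the cone $\RR\Gamma_\m(M)[1]$. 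This is essentially an inline reproof of the relevant piece of Bousfield localization theory that the paper outsources to Krause; what you gain is a self-contained argument, at the cost of a bit more adjunction bookkeeping. Both proofs hinge on the same observation (Remark~\ref{rem:counit}) that the chain-level map $\RR\Gamma_\m(M) \to M$ coming from the colimit construction is the counit. One small point worth making explicit in your write-up: your reduction in step (i) tacitly uses that $\RR\Gamma_\m$ is triangulated, which holds because it is a right adjoint to a triangulated functor (or, concretely, because it is a filtered colimit of the triangulated functors $\RHom_A(A/A_{\ge d}, -)$).
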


\begin{proof}
Let $M \in \D(A)$. It follows from (the proof of) \cite[Proposition 4.9.1]{krause} that there is an exact triangle
\begin{equation}
\label{eqn:adjtri}
\dGamma_{\m}(M) \xrightarrow{} M \xrightarrow{} \RR\Gamma_*(\w{M}) \xrightarrow{} ,
\end{equation}
where the first morphism is the counit of the adjunction between the inclusion $\D^{\Tors}(A) \into \D(A)$ and the functor $\dGamma_{\m}$. 
On the other hand, for all $d \ge 0$, the exact triangle
$
A_{\ge d} \to A \to A / A_{\ge d} \to 
$
induces an exact triangle
$
\RHom_A(A/A_{\geq d},M) \xrightarrow{} M \xrightarrow{} \RHom_A(A_{\geq d},M) \xrightarrow{} ;
$
passing to colimits, we obtain the exact triangle
\begin{equation}
\label{eqn:adjtri2}
\dGamma_{\m}(M) \xrightarrow{} M \xrightarrow{} \underset{d \to \infty}{\colim}\text{ }\RHom_A(A_{\geq d},M) \xrightarrow{} .
\end{equation}
Finally, by Remark~\ref{rem:counit}, the first morphisms in the triangles \eqref{eqn:adjtri} and \eqref{eqn:adjtri2} coincide.
\end{proof}
Recall from the introduction that, given $\widetilde{M}, \widetilde{N} \in \D_{\Qgr}(A)$, we write $\Ext^j(\widetilde{M}, \widetilde{N}) \ce \Hom_{\D_{\Qgr}(A)}(\widetilde{M}, \widetilde{N}[j])$. Given $A$ as in Setup~\ref{setup} and $M \in \Mod(A)$, let $\RR^j\Gamma(\tM) \ce H^j\dGamma_*(\tM)_0$.

\begin{prop}
Let $A$ be as in Setup~\ref{setup}, $M \in \Db(A)$, and $N \in \D(A)$. For all $j \in \Z$, we have an isomorphism
$
\Ext^j(\widetilde{M}, \widetilde{N}) \cong \underset{d \to \infty}{\colim}\Ext^{j}_{A}(M_{\geq d},N).
$
In particular, we have $\RR^j\Gamma(\widetilde{N}) \cong \Ext^j(\widetilde{A}, \widetilde{N})$. 
\end{prop}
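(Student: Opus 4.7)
The plan is to use the adjunction between the quotient functor $\Pi$ and its right adjoint $\RR\Gamma_*$ to rewrite $\Ext^j(\widetilde{M}, \widetilde{N})$ inside $\D(A)$, and then exploit the defining triangle for $\RR\Gamma_*$ together with a vanishing statement on the truncations $M_{\ge d}$.

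First, the adjunction gives $\Ext^j(\widetilde{M}, \widetilde{N}) \cong \Hom_{\D(A)}(M, \RR\Gamma_*(\widetilde{N})[j])$. The essential image of $\RR\Gamma_*$ lies in the right orthogonal to $\D^{\Tors}(A)$, and the cone of the truncation $M_{\ge d} \to M$ is $M/M_{\ge d}$. For $M \in \Db(A)$, Remark~\ref{rem:easy}(2) implies that each bigraded component of $H(M/M_{\ge d})$ is finite-dimensional over $\k$ and concentrated in internal degrees $< d$, so $M/M_{\ge d}$ lies in $\D^{\Tors}(A)$. Hence the restriction map
\[
\Hom_{\D(A)}(M, \RR\Gamma_*(\widetilde{N})[j]) \xrightarrow{\cong} \Hom_{\D(A)}(M_{\ge d}, \RR\Gamma_*(\widetilde{N})[j])
\]
is an isomorphism for every $d$.

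Next, the proof of Proposition~\ref{prop:derived_glsections_description} furnishes an exact triangle $\RR\Gamma_{\m}(N) \to N \to \RR\Gamma_*(\widetilde{N}) \to$ in $\D(A)$. Applying $\Hom_{\D(A)}(M_{\ge d}, -[j])$ and taking the filtered colimit over $d$ yields a long exact sequence whose $\Hom_{\D(A)}(M_{\ge d}, \RR\Gamma_*(\widetilde{N})[j])$-term is the constant system with value $\Ext^j(\widetilde{M}, \widetilde{N})$, and whose remaining terms are $\colim_d \Ext^j_A(M_{\ge d}, N)$ and $\colim_d \Ext^{j'}_A(M_{\ge d}, \RR\Gamma_{\m}(N))$ for $j' \in \{j, j+1\}$. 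The argument therefore reduces to the following vanishing, which I expect to be the main technical obstacle: for any $T \in \D^{\Tors}(A)$ and any $M \in \Db(A)$, one has $\colim_d \Hom_{\D(A)}(M_{\ge d}, T) = 0$. I would attack this first by reducing to $T \in \D^{\tors}(A)$, by factoring any morphism with finitely generated source through a finitely generated sub-dg-module of $T$, and then further reducing to $T$ bounded in internal degree (say with internal degrees $\le b$); in that case $M_{\ge d}$ admits a $K$-projective resolution concentrated in internal degrees $\ge d$, and $\Hom_{\D(A)}(M_{\ge d}, T) = 0$ follows directly as soon as $d > b$.

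Applying this vanishing to $\RR\Gamma_{\m}(N)[j]$ and $\RR\Gamma_{\m}(N)[j+1]$ kills the flanking terms of the colimit long exact sequence, producing the desired isomorphism $\colim_d \Ext^j_A(M_{\ge d}, N) \cong \Ext^j(\widetilde{M}, \widetilde{N})$. The ``in particular'' statement then follows from the special case $M = A$: since $\colim$ commutes with $H^j(-)_0$, Proposition~\ref{prop:derived_glsections_description} gives
\[
\Ext^j(\widetilde{A}, \widetilde{N}) \cong \colim_d \Ext^j_A(A_{\ge d}, N) \cong H^j(\RR\Gamma_*(\widetilde{N}))_0 = \RR^j\Gamma(\widetilde{N}).
\]
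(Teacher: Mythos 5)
Your proof takes essentially the same route as the paper's: both start from the exact triangle $\RR\Gamma_{\m}(N) \to N \to \RR\Gamma_*(\widetilde{N}) \to$, apply $\Hom_{\D(A)}(M_{\ge d}, -[j])$, take the filtered colimit over $d$, identify the middle term with $\Ext^j(\widetilde{M},\widetilde{N})$ via the adjunction and $\widetilde{M_{\ge d}} \cong \widetilde{M}$, and then kill the flanking terms by showing $\colim_d \Hom_{\D(A)}(M_{\ge d}, T) = 0$ for $T$ torsion, using that $T$ may be taken bounded above in internal degree and that $M_{\ge d}$ admits a semifree resolution living in internal degrees $\ge d$. The only cosmetic difference is in the reduction step from $\D^{\Tors}(A)$ to $\D^{\tors}(A)$: you propose factoring a morphism through a finitely generated sub-dg-module of $T$, whereas the paper instead writes $\RR\Gamma_\m(N)$ as a filtered colimit of objects in $\D^{\tors}(A)$ and commutes $\Hom_{\D(A)}(M_{\ge d},-)$ past that colimit using $M \in \Db(A)$; these are the same idea, though your phrasing about factoring through a sub-dg-module should really be an argument at the level of $K$-projective resolutions, since morphisms in $\D(A)$ need not factor through sub-dg-modules of the target.
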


\begin{proof}
The triangle \eqref{eqn:adjtri} and the exactness of filtered colimits gives a long exact sequence
\begin{align*}
\cdots \to \underset{d \to \infty}{\colim} \Hom_{\D(A)}(M_{\ge d}, \RR\Gamma_\m(N)[j]) & \to  \underset{d \to \infty}{\colim} \Hom_{\D(A)}(M_{\ge d}, N[j])  \\ & \to \underset{d \to \infty}{\colim} \Hom_{\D(A)}(M_{\ge d}, \RR\Gamma_*(\widetilde{N})[j]) \\
& \to \underset{d \to \infty}{\colim} \Hom_{\D(A)}(M_{\ge d}, \RR\Gamma_\m(N)[j+1]) \to \cdots.
\end{align*}
Since $\widetilde{M_{\ge m}} \cong \widetilde{M}$ for all $m \in \Z$, adjunction implies that
$
\underset{d \to \infty}{\colim} \Hom_{D(A)}(M_{\ge d}, \RR\Gamma_*(\widetilde{N})[j])  \cong \Ext^j_A(\widetilde{M}, \widetilde{N}).
$
It therefore suffices to show that $\underset{d \to \infty}{\colim} \Hom_{\D(A)}(M_{\ge d}, \RR\Gamma_\m(N)[j]) = 0$ for all $j$. Replacing $N$ with an appropriate shift, we may assume without loss of generality that $j = 0$. Since $\RR\Gamma_\m(N)$ is torsion, it is a filtered colimit of objects in $\D^{\tors}(A)$; thus, since $M \in \Db(A)$, we need only show
$\underset{d \to \infty}{\colim} \Hom_{\D(A)}(M_{\ge d}, T) = 0$ for $T \in \D^{\tors}(A)$. There is a quasi-isomorphism $T \onto T / T_{\ge m}$ for $m \gg 0$, and so we may assume $T_m = 0$ for $m \gg 0$. Let $F^d$ be a semifree resolution of $M_{\ge d}$ as in \cite[Proposition 2.16]{brown2023orlovs} for all $d$; the construction of $F^d$ shows that $F^d_i = 0$ for $i < d$. We therefore have $\Hom_{\D(A)}(M_{\ge d}, T) \cong \Hom_{K(A)}(F^d, T) = 0$ for $d \gg 0$, where the isomorphism follows from Remark~\ref{homotopy}. This proves the first statement, and the second is immediate from the first, along with Proposition~\ref{prop:derived_glsections_description}. 
\end{proof}

\subsection{Serre duality}
We now prove Theorem~\ref{thm: Serre_duality}. We begin with the following technical result:

\begin{prop}
\label{prop:ext}
Let $A$ be a dg-algebra as in Setup~\ref{setup} with a balanced dualizing dg-module $R$, and let $M\in \Db(A)$. For $j\in \mathbb{Z}$ and $d \ge 1$, we have a canonical isomorphism of $\k$-vector spaces
$$
\dGamma^j(\tM)\cong \Hom_\k(\Ext_A^{-j-1}(M_{\ge d},R), \k).
$$

\end{prop}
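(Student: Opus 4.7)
The plan is to reduce to working with the truncation $M_{\ge d}$ in place of $M$, then apply Local Duality combined with the balanced dualizing property and the standard tensor-hom adjunction to pass from $\RHom_{A^{\op}}(-,\uHom_\k(A,\k))$ to $\uHom_\k(-,\k)$.

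First, since $\widetilde{M_{\ge d}}\cong\widetilde{M}$ in $\D_{\Qgr}(A)$, applying the triangle~\eqref{eqn:adjtri} from the proof of Proposition~\ref{prop:derived_glsections_description} to $M_{\ge d}$ gives
$$\RR\Gamma_{\m}(M_{\ge d})\to M_{\ge d}\to \RR\Gamma_*(\widetilde{M})\to.$$
Taking $H^{\bullet}(-)_0$ and using that $(M_{\ge d})_0=0$ for $d\ge 1$ (so $H^{i}(M_{\ge d})_0=0$ for every $i$), the associated long exact sequence in internal degree zero degenerates into isomorphisms
$$\RR^j\Gamma(\widetilde{M})\;\cong\;H^{j+1}_{\m}(M_{\ge d})_0 \qquad (j\in\Z).$$

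Next, I would note that $M_{\ge d}\in\Db(A)$, because its cohomology is a submodule of $H(M)$ and thus finitely generated over $H^0(A)$ (using Remarks~\ref{rem:easy} and the Noetherianity of $H^0(A)$). Applying Corollary~\ref{rem:LD} to $M_{\ge d}$ then yields
$$\RR\Gamma_{\m}(M_{\ge d})\;\cong\;\RHom_{A^{\op}}\!\bigl(D(M_{\ge d}),\,\uHom_{\k}(A,\k)\bigr).$$
The key computation is the tensor-hom adjunction: for any $N\in\D(A^{\op})$, viewing $\uHom_{\k}(A,\k)$ as a right $A^{\op}$-module via the outer right-$A$-action on $A$, one has
$$\RHom_{A^{\op}}\!\bigl(N,\,\uHom_{\k}(A,\k)\bigr)\;\cong\;\uHom_{\k}(A\otimes_{A^{\op}}^{\LL}N,\,\k)\;\cong\;\uHom_{\k}(N,\k),$$
which holds at the level of $\D(\k)$ (and compatibly with the remaining $A$-structure on $\uHom_{\k}(A,\k)$).

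Finally, combining the above and taking $H^{j+1}(-)_0$, the convention $\uHom_{\k}(V,\k)^{j}_{0}=\Hom_{\k}(V^{-j}_{0},\k)$ gives
$$\RR^j\Gamma(\widetilde{M})\;\cong\;H^{j+1}\uHom_{\k}\!\bigl(D(M_{\ge d}),\k\bigr)_{0}\;\cong\;\Hom_{\k}\!\bigl(\Ext^{-j-1}_{A}(M_{\ge d},R),\,\k\bigr),$$
as desired. The main technical obstacle I anticipate is justifying the adjunction $\RHom_{A^{\op}}(-,\uHom_{\k}(A,\k))\simeq\uHom_{\k}(-,\k)$ at the derived level in a way that respects the bimodule/bigrading bookkeeping from Proposition~\ref{prop:injbimodule} and Lemma~\ref{homotopy_flat}; once that is in place, the rest of the argument is a formal consequence of Local Duality and the balanced dualizing hypothesis.
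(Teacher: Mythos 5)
Your proposal is correct and follows essentially the same path as the paper's proof: reduce via the triangle \eqref{eqn:adjtri} to $H^{j+1}_\m(M_{\ge d})_0$, apply Corollary~\ref{rem:LD}, and then use the tensor-hom adjunction together with exactness of $\k$-duals. The only additions are the (correct but routine) verification that $M_{\ge d}\in\Db(A)$ and a more explicit unwinding of the adjunction step, which the paper simply cites as ``adjunction.''
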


\begin{proof}
Since $\widetilde{M_{\ge d}} = \tM$, the triangle \eqref{eqn:adjtri} yields a long exact sequence
$$
\cdots \to H^j(M_{\ge d})_0 \to \RR^j\Gamma(\widetilde{M}) \to H^{j+1}_\m(M_{\ge d})_0 \to H^{j+1}(M_{\ge d})_0 \to \cdots.
$$
Since $d \ge 1$, the outer terms vanish, yielding the isomorphism $\RR^j\Gamma(\widetilde{M}) \xra{\cong} H^{j+1}_\m(M_{\ge d})_0$. We therefore have isomorphisms
\begin{align*}
       \RR^j\Gamma(\widetilde{M})  \cong   H^{j+1}_\m(M_{\ge d})_0 
                   \cong   \Ext^{j+1}_{A^{\op}}(D(M_{\geq d}),\lhom_\k(A,\k)) 
                   & \cong   \Ext^{j+1}_{\k}(D(M_{\geq d}),\k) \\
                  & \cong   \Hom_\k(\Ext^{-j-1}_{A}(M_{\geq d},R),\k),
    \end{align*}
where the second isomorphism follows from Corollary~\ref{rem:LD}, the third from adjunction, and the fourth from the exactness of taking $\k$-duals. 
\end{proof}

\begin{proof}[Proof of Theorem~\ref{thm: Serre_duality}]
The canonical isomorphisms $\RR^j\Gamma(\tM)\cong \Hom_\k(\Ext_A^{-j-1}(M_{\ge d},R), \k)$ from Proposition~\ref{prop:ext} for $d \ge 1$ are compatible with the maps
$$
\Hom_\k(\Ext_A^{-j-1}(M_{\ge d+1},R), \k) \to \Hom_\k(\Ext_A^{-j-1}(M_{\ge d},R), \k)
$$
induced by the inclusions $M_{\ge d+1} \into M_{\ge d}$. Passing to the colimit as $d$ tends to infinity yields the desired isomorphism. 
\end{proof}

\begin{example}
\label{ex:gorserre}
If $A$ is a graded commutative and Gorenstein dg-algebra, then Theorem~\ref{prop:balanced} implies that $A(-a)[n]$ is a balanced dualizing dg-module for $A$. It therefore follows from Theorem~\ref{thm: Serre_duality} that, for all $M \in \Db(A)$ and $j \in \Z$, we have a $\k$-linear isomorphism
$$
\RR^j\Gamma(\widetilde{M}) \cong \Hom_\k(\Ext^{n-j-1}(\widetilde{M}, \widetilde{A(-a)})).
$$
For instance, this isomorphism holds for each of the families of dg-algebras in \cite[Examples 2.23---2.26]{brown2023orlovs}. A positive answer to Conjecture~\ref{conj:balanced} would imply that the graded commutative assumption in this example may be removed.
\end{example}

\section{Application to finiteness properties of derived global sections}
\label{sec:app}
The goal of this section is to apply our Serre Duality Theorem (Theorem~\ref{thm: Serre_duality}) to establish some finiteness properties for derived global sections of objects in $\D_{\qgr}(A)$. Let us begin by fixing some notation. Given $i \in \Z$, let $\D(A)_{\ge i}$ denote the subcategory of $\D(A)$ given by objects $M$ such that $M_j = 0$ for $j < i$, and set $\Db(A)_{\ge i} \ce \Db(A) \cap \D(A)_{\ge i}$. The truncation functor $\tau_{\ge i} \co \D(A) \to \D(A)_{\ge i}$ is the right adjoint to the inclusion map $\D(A)_{\ge i} \into \D(A)$. We conclude that $\RR\Gamma_{\ge i} \ce \tau_{\ge i} \circ \RR\Gamma_*$ is the right adjoint of the  composition $\D(A)_{\ge i} \into \D(A) \xra{\Pi} \D_{\Qgr}(A)$.

\begin{thm}
\label{thm:cd}
Let $A$ be as in Setup~\ref{setup}. Assume $A$ is Gorenstein and that $A$ admits a balanced dualizing dg-module $R$. Let $M \in \Db(A)$, and fix $i \in \Z$. 
\begin{enumerate}
\item $H^j(\RR\Gamma_{\ge i}(\widetilde{M}))$ is a finitely generated $H^0(A)$-module for all $j \in \Z$.
\item 
$\RR^j\Gamma(\widetilde{M}) \ne 0$ only if $\inf(M)  \le j \le \sup(M) - \inf(R) - 1$ (see~\eqref{eqn:supinf} for the definitions of the $\sup$ and $\inf$ of a dg-module). 
\item $\RR\Gamma_{\ge i}$ maps objects in $\D_{\qgr}(A)$ to $\Db(A)_{\ge i}$. Thus, $\RR\Gamma_{\ge i}$ gives a fully faithful right adjoint of the composition $\Db(A)_{\ge i} \into \Db(A) \xra{\pi} \D_{\qgr}(A)$. 
\end{enumerate}
\end{thm}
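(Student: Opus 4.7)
The plan is to establish part (2) first, then use related tools to settle (1), from which the containment in (3) is immediate; the remaining adjunction and fully faithfulness claims in (3) then follow formally.

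For (2), I will substitute $N \ce M_{\ge d}$ (with $d \ge 1$) into the exact triangle $\RR\Gamma_{\m}(N) \to N \to \RR\Gamma_*(\widetilde{N}) \to$ of~\eqref{eqn:adjtri}, using $\widetilde{M_{\ge d}} = \widetilde{M}$. Because $(M_{\ge d})_0 = 0$, the long exact sequence in cohomology at internal degree $0$ collapses to isomorphisms $\RR^j\Gamma(\widetilde{M}) \cong H^{j+1}_{\m}(M_{\ge d})_0$. Combined with Local Duality (Corollary~\ref{rem:LD}) and the adjunction used in the proof of Proposition~\ref{prop:ext}, this identifies the right-hand side with $\Hom_\k(\Ext^{-j-1}_A(M_{\ge d}, R), \k)$. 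The dualizing hypothesis on $R$ then gives $\uExt^k_A(N, R) = 0$ whenever $k < \inf(R) - \sup(N)$; since $\sup(M_{\ge d}) \le \sup(M)$, this target vanishes for $j > \sup(M) - \inf(R) - 1$, which is the upper bound. For the lower bound, I will invoke the hypercohomology spectral sequence $E_2^{p,q} = H^p_{\m}(H^q(M_{\ge d})) \Rightarrow H^{p+q}_{\m}(M_{\ge d})$. Since $H^p_{\m}$ vanishes for $p < 0$ and $H^q(M_{\ge d}) = 0$ for $q < \inf(M)$, the spectral sequence forces $H^k_{\m}(M_{\ge d}) = 0$ for $k < \inf(M)$, and by the edge argument $H^{\inf(M)}_{\m}(M_{\ge d}) \cong \Gamma_{\m}(H^{\inf(M)}(M_{\ge d}))$. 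Taking internal degree $0$ and using $H^{\inf(M)}(M_{\ge d})_0 = 0$ yields $\RR^j\Gamma(\widetilde{M}) = 0$ for all $j < \inf(M)$.

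For (1), I will apply the internal truncation $\tau_{\ge i}$ (which is exact at the level of dg-modules, as it simply discards certain internal gradings) to the triangle $\RR\Gamma_{\m}(M) \to M \to \RR\Gamma_*(\widetilde{M}) \to$, obtaining
\[
\RR\Gamma_{\m}(M)_{\ge i} \to M_{\ge i} \to \RR\Gamma_{\ge i}(\widetilde{M}) \to ,
\]
and reduce the problem to showing that the first two terms lie in $\Db(A)$. The middle term is handled via Remark~\ref{rem:easy}(1): $H(M_{\ge i}) = H(M)_{\ge i}$ is a graded submodule of the finitely generated $H(A)$-module $H(M)$, and $H(A)$ is Noetherian by Setup~\ref{setup}. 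For the leftmost term, Local Duality (Corollary~\ref{rem:LD}) combined with tensor-Hom adjunction supplies a $\k$-linear isomorphism $\RR\Gamma_{\m}(M) \cong \RHom_\k(D(M), \k)$, where $D(M) \in \Db(A^{\op})$. Since $A$ (hence $A^{\op}$) is connected, $H(D(M))$ is supported in internal degrees bounded below; dually, $H(\RR\Gamma_{\m}(M))$ is supported in internal degrees bounded above. Combined with the boundedness of $D(M)$ in cohomological degree and finite-dimensionality of $H(D(M))_k$ in each internal degree (Remark~\ref{rem:easy}(2)), this forces $\RR\Gamma_{\m}(M)_{\ge i}$ to be finite-dimensional over $\k$, hence in $\Db(A)$.

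For (3), the containment $\RR\Gamma_{\ge i}(\widetilde{M}) \in \Db(A)_{\ge i}$ is immediate from (1). The adjunction claim is the composite of the standard adjunctions $(\D(A)_{\ge i} \hookrightarrow \D(A), \tau_{\ge i})$ and $(\Pi, \RR\Gamma_*)$, restricted to the relevant subcategories. Fully faithfulness reduces to verifying that the counit $\pi(\RR\Gamma_{\ge i}(\widetilde{M})) \to \widetilde{M}$ is an isomorphism; this factors as $\pi(\RR\Gamma_*(\widetilde{M})_{\ge i}) \to \pi(\RR\Gamma_*(\widetilde{M})) \to \widetilde{M}$, where the second map is an isomorphism by the fully faithfulness of $\RR\Gamma_*$ (Section~\ref{sec:qgr}), and the first because $\RR\Gamma_*(\widetilde{M}) / \RR\Gamma_*(\widetilde{M})_{\ge i}$ is supported in internal degrees $< i$ and therefore lies in $\D^{\Tors}(A)$ (any cohomology class at internal degree $k < i$ is annihilated by $H^0(A)_{\ge i - k}$). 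The main obstacle I anticipate is the finiteness step for $\RR\Gamma_{\m}(M)_{\ge i}$ in part (1): a priori $\RR\Gamma_{\m}(M)$ need not have finitely generated cohomology over $A$, and the argument depends crucially on Local Duality to produce the required upper bound on its internal degree support.
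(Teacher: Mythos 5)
Your proposal is correct and reaches the same conclusions, but parts (1) and (2) take genuinely different routes from the paper.

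For part (2), the upper bound is argued as in the paper: via Proposition~\ref{prop:ext} and the vanishing estimate $\uExt^k_A(N,R)=0$ for $k<\inf(R)-\sup(N)$ (this is exactly Lemma~\ref{inf_internal_hom}). For the lower bound the paper goes through Lemma~\ref{inf_internal_hom} again, now applied directly to $\RR^j\Gamma(\widetilde{M})=\underset{d}{\colim}\,\Ext^j_A(A_{\ge d},M)$ (using $\sup(A_{\ge d})\le 0$ since $A$ is connected). You instead assemble a hypercohomology spectral sequence $E_2^{p,q}=H^p_\m(H^q(M_{\ge d}))\Rightarrow H^{p+q}_\m(M_{\ge d})$, together with the vanishing $H^p_\m=0$ for $p<0$ and an edge argument at degree $\inf(M)$. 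Both approaches work; the paper's is shorter because the needed inequality is already packaged as Lemma~\ref{inf_internal_hom}, while yours requires (implicitly) verifying the existence and convergence of the spectral sequence in the dg setting and that the negative-degree vanishing of $H^p_\m$ on a single cohomology module holds for the dg-theoretic definition of $H^p_\m$ (it does, since semifree resolutions of $A/A_{\ge d}$ live in nonpositive cohomological degree).

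For part (1), the difference is more substantive. The paper first shows $\dim_\k H^j_\m(M)_\ell<\infty$ for each $\ell$ (via Remark~\ref{rem:easy} and Proposition~\ref{prop:ext}), and then proves $H^j_\m(M)_\ell=0$ for $\ell\gg 0$ by a filtration argument on $A/A_{\ge d}$ that invokes condition~$\chi$, which in turn is guaranteed by the Gorenstein hypothesis through Proposition~\ref{prop: Gorenstein_chi}. You instead apply Corollary~\ref{rem:LD} plus tensor-Hom adjunction to get a $\k$-linear identification $\RR\Gamma_\m(M)\cong\uHom_\k(D(M),\k)$ with $D(M)\in\Db(A^\op)$; since $H(D(M))$ is bounded cohomologically, bounded below in internal degree, and has finite-dimensional graded pieces (Remark~\ref{rem:easy}), its $\k$-dual truncated at internal degrees $\ge i$ is finite-dimensional. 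This is cleaner, self-contained within the dualizing-complex framework, and notably does not use the Gorenstein hypothesis or condition~$\chi$ at all, so it establishes the conclusion of Theorem~\ref{thm:cd} under the seemingly weaker hypothesis that $A$ merely admits a balanced dualizing dg-module. Part (3) is handled identically in both. One expository advantage of the paper's route is that it makes explicit the role of Artin-Zhang's condition~$\chi$, which is the standard hypothesis in noncommutative projective geometry for this type of finiteness statement, but your route shows that condition~$\chi$ is not logically necessary here once a balanced dualizing dg-module is in hand.
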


\begin{example}
Suppose $A$ and $M$ are as in Theorem~\ref{thm:cd}. Choose $a, n \in \Z$ such that there is an isomorphism $\RHom_A(\kk, A) \cong \k(a)[-n]$ in $\D(A)$. If $A$ is graded commutative, then $A(-a)[n]$ is a balanced dualizing dg-module for $A$ by Theorem~\ref{prop:balanced}. Theorem~\ref{thm:cd}(2) therefore implies:
$$
\RR^j\Gamma(\widetilde{M}) \ne 0 \quad \text{only if} \quad \inf(M)  \le j \le \sup(M) + n - \inf(A) - 1.
$$
Assume, in addition, that $A = A^0$, $A$ is generated in internal degree 1, and $M$ is concentrated in degree 0. Let $X = \Proj(A)$. In this case,  $n = \dim(A)$, and so Theorem~\ref{thm:cd} recovers the classical statement that the $A$-module $\bigoplus_{\ell \ge i} H^j(X, \widetilde{M}(\ell))$ is finitely generated for all $i$, and $H^j(X, \widetilde{M}) \ne 0$ only if $0 \le j \le n - 1 = \dim(X)$. 
\end{example}

It was previously known that the functor $\RR\Gamma_{\ge i}$ restricts to a functor $\D_{\qgr}(A) \to \Db(A)_{\ge i}$ for dg-algebras $A$ as in Setup~\ref{setup} in the following cases:
\begin{enumerate}
\item A result of Artin-Zhang implies this when $A = A^0$, and $A$ satisfies Artin-Zhang's \emph{condition~$\chi$} \cite[Theorem 3.8(3)]{AZ}.
\item By \cite[Proposition 3.6]{brown2023orlovs}, this holds when $A^0$ is either Gorenstein or commutative.
\end{enumerate}

As a first step toward proving Theorem~\ref{thm:cd}, we formulate a natural extension of Artin-Zhang's condition~$\chi$ to dg-algebras:

\begin{dfn}[cf. \cite{AZ} Definition 3.7]\label{def:chi}
Let $A$ be a dg-algebra as in Setup~\ref{setup}. We say $A$ satisfies \emph{condition $\chi$} if, for all $M\in \Db(A)$ and $i \in \Z$, we have $\uExt^i_A(\k, M)_j = 0$ for $j \gg 0$. 
\end{dfn}

Suppose $A$ is as in Setup~\ref{setup}. If $A = A^0$, then Definition~\ref{def:chi} is equivalent to Artin-Zhang's condition~$\chi$ by \cite[Proposition 3.8(1)]{AZ}. When $A$ is graded commutative, condition $\chi$ always holds: indeed, choosing a semifree resolution $F$ of $\k$ as in \cite[Proposition 2.16]{brown2023orlovs}, it is evident that $\Ext^j_A(\k, M) = H^j\uHom_A(F, M)$ is finitely generated over $A^0$; since it is also a $\k$-module, condition~$\chi$ follows immediately. This argument fails in the noncommutative case, since one cannot always equip the semifree resolution $F$ of $\k$ with a dg-$A$-$A$-bimodule structure. Indeed, there are algebras $A$ as in Setup~\ref{setup} with $A = A^0$ such that condition $\chi$ does not hold (see \cite[Example 0.1]{SZ}). The following generalization of a result of Yekutieli-Zhang \cite[Corollary 4.3]{Yekutieli-Zhang} shows that condition~$\chi$ holds for Gorenstein dg-algebras:

\begin{prop}\label{prop: Gorenstein_chi}
Let $A$ be a dg-algebra as in Setup~\ref{setup}. If $A$ is Gorenstein (Definition~\ref{def:gorenstein}), then $A$ satisfies condition $\chi$. 
\end{prop}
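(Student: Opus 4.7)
The plan is to transfer the problem to the dual side via Gorenstein reflexivity, where the internal-degree bound becomes transparent from the connectedness of $A$. For $M\in \Db(A)$, set $N \ce \RHom_A(M, A)$, which by the Gorenstein hypothesis lies in $\Db(A^{\op})$ and satisfies $M \simeq \RHom_{A^{\op}}(N, A)$. Combining the bimodule swap adjunction
$$
\RHom_A(\kk, \RHom_{A^{\op}}(N, A)) \simeq \RHom_{A^{\op}}(N, \RHom_A(\kk, A))
$$
with the Gorenstein identification $\RHom_A(\kk, A) \simeq \kk(a)[-n]$ then yields an isomorphism
$$
\RHom_A(\kk, M) \simeq \RHom_{A^{\op}}(N, \kk)(a)[-n]
$$
in $\D(\kk)$. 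Unwinding shifts, this gives $\uExt^i_A(\kk, M)_j \cong \uExt^{i-n}_{A^{\op}}(N, \kk)_{j+a}$ for all $i, j\in \Z$, so it suffices to show that $\uExt^k_{A^{\op}}(N, \kk)_\ell$ vanishes for $\ell \gg 0$ and each fixed $k$.

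To justify the swap at the derived level, I would pick a $K$-injective resolution $I$ of $A$ as a dg-$A$-$A$-bimodule via Proposition~\ref{prop:injbimodule}, which by Lemma~\ref{homotopy_flat}(2) remains $K$-injective both over $A$ and over $A^{\op}$, and pair it with a $K$-projective resolution $P \to \kk$ over $A$. The tautological chain-level adjunction
$$
\uHom_A(P, \uHom_{A^{\op}}(N, I)) \cong \uHom_{A^{\op}}(N, \uHom_A(P, I))
$$
then realizes the swap as a quasi-isomorphism of dg-$\kk$-modules, using that $\uHom_A(P, I)$ computes $\RHom_A(\kk, A) \simeq \kk(a)[-n]$.

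For the vanishing, I would invoke Remarks~\ref{rem:easy}(1) to replace $N$ by an honestly finitely generated dg-$A^{\op}$-module; the connectedness of $A$ then forces its generators, and hence $N$ itself, into internal degrees $\ge d_0$ for some $d_0\in \Z$. A semifree resolution $F = A \otimes_\kk V$ of $N$ built as in \cite[Proposition 2.16]{brown2023orlovs} can be taken with $V_j = 0$ for $j < d_0$, so $F_i = 0$ for all $i < d_0$. Since $\kk$ is concentrated in internal degree $0$, any internal-degree-$\ell$ $A^{\op}$-linear map $F \to \kk$ is determined by its restriction to $F_{-\ell}$, which vanishes once $\ell > -d_0$. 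Hence $\uHom_{A^{\op}}(F, \kk)_\ell = 0$ for $\ell > -d_0$, and in particular $\uExt^k_{A^{\op}}(N, \kk)_\ell = 0$ in the same range. I expect the main technical burden to be the bimodule bookkeeping for the swap adjunction (keeping track of which side of each $\RHom$ carries which action); once that is in place, the vanishing step is an immediate consequence of the connectedness of $A$.
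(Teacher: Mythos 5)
Your approach is essentially identical to the paper's: both reduce the problem via Gorenstein duality to computing $\RHom_{A^{\op}}(\RHom_A(M,A), \RHom_A(\kk,A))$ (the paper writes this as a single natural ``composition'' map being a quasi-isomorphism, while you factor it as biduality followed by the swap adjunction, but these amount to the same thing), and then both conclude by choosing a semifree resolution $F$ of $\RHom_A(M,A)$ over $A^{\op}$ bounded below in internal degree, so that $\uHom_{A^{\op}}(F, \kk(a)[-n])$ vanishes in large internal degrees.

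There is, however, one genuine gap in your argument, and it sits exactly where you flagged the ``bimodule bookkeeping.'' To pass from $\RHom_{A^{\op}}(N, \RHom_A(\kk,A))$ to $\RHom_{A^{\op}}(N,\kk)(a)[-n]$ you must know that the isomorphism $\RHom_A(\kk,A)\simeq \kk(a)[-n]$ holds in $\D(A^{\op})$ --- i.e.\ compatibly with the \emph{left} $A$-action --- since $\RHom_{A^{\op}}(N,-)$ is a functor on left dg-$A$-modules. But Definition~\ref{def:gorenstein}(3) only asserts this isomorphism in $\D(A)$, as right dg-$A$-modules. The paper closes this gap by observing that $\RHom_A(\kk,A)$ (modeled on a bi-$K$-injective resolution) has cohomology concentrated in bidegree $(-a,n)$, so the smart truncations $\sigma^{\ge n}$ and $\sigma^{\le n}$ give a zig-zag of dg-$A$-$A$-bimodule quasi-isomorphisms between $\RHom_A(\kk,A)$ and $\kk(a)[-n]$; this upgrades the isomorphism to $\D(A\otimes_\kk A^{\op})$. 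Your proof would be complete once you insert this upgrade before applying $\RHom_{A^{\op}}(N,-)$.
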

\begin{proof}
Let $M \in \Db(A)$. Since $A$ is Gorenstein, the natural map
$$
\RHom_A(\k,M) \to \RHom_{A^{\op}}(\RHom_A(M,A),
\RHom_A(\k,A))
$$
is a $\kk$-linear quasi-isomorphism. Using again that $A$ is Gorenstein, choose $a, n \in \Z$ such that there is an isomorphism $\RHom_A(\k, A) \simeq \k(a)[-n]$ in $\D(A)$. In fact, $\RHom_A(\k, A)$ and $\k(a)[-n]$ are isomorphic in $\D(A\otimes_{\k} A^{\op})$, since the quasi-isomorphisms 
$$
\RHom_A(\k, A) \xra{\simeq} \sigma^{\ge n} \RHom_A(\k, A) \xleftarrow{\simeq} \sigma^{\le n} \sigma^{\ge n} \RHom_A(\k, A) \cong \kk[-n](a)
$$
are morphisms of dg-$A$-$A$-bimodules; here $\sigma^{\ge n}$ and $\sigma^{\le n}$ denote smart truncations, as in \cite[Remark 2.7]{brown2023orlovs}. We therefore have a $\k$-linear quasi-isomorphism 
$$
  \RHom_{A^{\op}}(\RHom_A(M,A),
\RHom_A(\k,A)) \simeq \RHom_{A^{\op}}(\RHom_A(M,A), \k(a)[-n]).
$$
Since $A$ is Gorenstein, we have $\RHom_A(M,A)\in \Db(A^{\op})$. 
Choose a semifree resolution $F$ of the dg-$A^{\op}$-module $\RHom_A(M,A)$ as in \cite[Proposition 2.16]{brown2023orlovs}, so that
$$
\RHom_{A^{\op}}(\RHom_A(M,A), \k(a)[-n]) \cong \uHom_{A^{\op}}(F, \k(a)[-n]). 
$$
Finally, we have $F_j = 0$ for $j \ll 0$, which implies that $\uHom_{A^{\op}}(F, \k(a)[-n])_j = 0$ for $j \gg 0$. 
\end{proof}

Before we prove Theorem~\ref{thm:cd}, we establish the following technical result:

\begin{lem}\label{inf_internal_hom}
    Let $A$ be a dg-algebra as in Setup~\ref{setup}, $M \in \Db(A)$, and $N \in \D(A)$ such that $\inf(N) >-\infty$. We have $\inf(\RHom_A(M,N))\geq \inf(N)-\sup(M)$.
\end{lem}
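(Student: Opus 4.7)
The goal is to prove that $H^i \RHom_A(M,N) = 0$ for all integers $i < \inf(N) - \sup(M)$. The plan is to model $\RHom_A(M,N)$ using a semifree (hence $K$-projective) resolution of $M$ that is bounded above in cohomological degree, together with a bounded-below replacement of $N$, and then read the vanishing directly off the internal Hom complex term-by-term.

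For the first step, I would construct a semifree resolution $F \xra{\simeq} M$ with $F^j = 0$ for all $j > \sup(M)$. The approach is a top-down construction: by \Cref{rem:easy}, $H^{\sup(M)}(M)$ is a finitely generated $H^0(A)$-module, so one may choose a surjection onto it from a finite direct sum of internal shifts of $A[-\sup(M)]$; by connectedness of $A$, the dg-module $A(i)[-\sup(M)]$ is concentrated in cohomological degrees $\leq \sup(M)$. Lift this to a map to $M$, replace $M$ by the (co)fiber, and iterate at lower cohomological stages. Noetherianity of $H^0(A)$ together with finite generation of each $H^j(M)$ over $H^0(A)$ (\Cref{rem:easy}) ensures each stage is manageable, and the semifree machinery underlying \cite[Proposition 2.16]{brown2023orlovs} adapts to guarantee $F^j = 0$ for $j > \sup(M)$.

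For the second step, I would replace $N$ by the smart truncation $N' \ce \sigma^{\geq \inf(N)} N$, which is quasi-isomorphic to $N$ since $H^i(N) = 0$ for $i < \inf(N)$ and satisfies $(N')^j = 0$ for $j < \inf(N)$. Since $F$ is $K$-projective, this yields a quasi-isomorphism $\RHom_A(M,N) \simeq \uHom_A(F, N')$. The $i$-th cohomological piece is
\[
\uHom_A(F, N')^i \;=\; \prod_{j \in \Z} \Hom_{A^\nat}(F^j, (N')^{j+i}).
\]
A factor in this product can be nonzero only if $F^j \neq 0$ (forcing $j \leq \sup(M)$) and $(N')^{j+i} \neq 0$ (forcing $j + i \geq \inf(N)$); together these give $i \geq \inf(N) - \sup(M)$. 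Hence $\uHom_A(F, N')^i = 0$ for every $i < \inf(N) - \sup(M)$, and passing to cohomology yields $\uExt^i_A(M,N) = 0$ in that range.

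The principal obstacle is the first step: carrying out the top-down semifree construction carefully so that the cohomological bound $F^j = 0$ for $j > \sup(M)$ is preserved throughout the inductive process. A cleaner alternative that sidesteps this issue, should it prove cumbersome, is to take any semifree resolution $G \xra{\simeq} M$ (as provided by \cite[Proposition 2.16]{brown2023orlovs}), pass to the smart truncation $F \ce \sigma^{\leq \sup(M)} G$ (still quasi-isomorphic to $M$, though no longer $K$-projective), and instead compute $\RHom_A(M,N) \simeq \uHom_A(F, I)$ using a bounded-below $K$-injective resolution $I$ of $N'$ satisfying $I^j = 0$ for $j < \inf(N)$ (which exists since $N'$ is bounded below in $\Mod(A)$). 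The same degree-counting argument then yields the desired vanishing.
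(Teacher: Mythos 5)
Your proof is correct and takes essentially the same approach as the paper: both use a semifree resolution $F$ of $M$ with $F^j=0$ for $j>\sup(M)$ (as provided by \cite[Proposition 2.16]{brown2023orlovs}) together with the smart truncation $\sigma^{\geq\inf(N)}N$, and then read the vanishing off $\uHom_A(F,\sigma^{\geq\inf(N)}N)$ by degree counting. The ``principal obstacle'' you flag is actually not an issue---the paper simply observes that the semifree resolution constructed in \cite[Proposition 2.16]{brown2023orlovs} already satisfies $F^j=0$ for $j>\sup(M)$ by the nature of that construction, so no additional top-down argument (and no detour through $K$-injective resolutions) is needed.
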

\begin{proof}
Write $\ell \ce \inf(N)$, and let $\sigma^{\ge \ell} N$ denote the smart truncation of $N$ in cohomological degrees at least~$\ell$. There is a quasi-isomorphism $N \xra{\simeq} \sigma^{\ge \ell} N$ of dg-$A$-modules~\cite[Remark 2.7]{brown2023orlovs}. Let $F \xra{\simeq} M$ be a semifree resolution as in~\cite[Proposition 2.16]{brown2023orlovs}; it follows from the proof of this result that $F^i = 0$ for $i > \sup(M)$. We have $\RHom_A(M, N) \cong \uHom_A(F, \sigma^{\ge \ell} N)$, and the latter complex vanishes in cohomological degrees smaller than $\inf(N) - \sup(M)$. 
\end{proof}

\begin{proof}[Proof of Theorem~\ref{thm:cd}]
Let $j \in \Z$. Since $H^j(M)$ is finitely generated over $H^0(A)$, the triangle~\eqref{eqn:adjtri} implies that, to prove (1), it suffices to show that $H^j_\m(M)_{\ge i}$ is finitely generated over $H^0(A)$. We begin by showing that $\dim_\kk H^j_\m(M)_\ell < \infty$ for all $\ell \in \Z$. Remark~\ref{rem:easy} implies that $\dim_\k H^t(M)_\ell < \infty$ for all $\ell, t \in \Z$; since $R$ is a dualizing dg-module, $\RHom(M_{\ge 1}, R) \in \Db(A)$, and so Proposition~\ref{prop:ext} yields that $\dim_\k \RR^t\Gamma(\widetilde{M(\ell)}) < \infty$ for all $\ell, t \in \Z$ as well. The triangle~\eqref{eqn:adjtri} thus implies that $\dim_\kk H^j_\m(M)_\ell < \infty$ for all $\ell \in \Z$. To prove (1), we therefore need only show $H^j_\m(M)_\ell = 0$ for $\ell \gg 0$. 

Let $d \in \Z$. The short exact sequence $0 \to A_{\ge d} / A_{\ge d+1} \to A / A_{\ge d+1} \to A / A_{\ge d} \to 0$ induces the following exact sequence:
$$
\lext^{j-1}_{A}(A_{\ge d} / A_{\ge d+1},M)\xrightarrow{} \lext^{j}_{A}(A/A_{\ge d},M)\xrightarrow{} \lext^{j}_{A}(A/A_{\ge d+1},M) \xrightarrow{} \lext^{j}_{A}(A_{\ge d} / A_{\ge d+1},M).
$$
The dg-$A$-module $A_{\ge d} / A_{\ge d+1}$ is quasi-isomorphic to a finite direct sum of cohomological shifts of $\k(-d)$. 
Since $A$ satisfies condition~$\chi$ (Proposition~\ref{prop: Gorenstein_chi}), we conclude that 
there exists $d \gg 0$ such that 
$$
\lext^{j-1}_{A}(A_{\ge m} / A_{\ge m+1},M)_{\ge i} = \lext^{j}_{A}(A_{\ge m} / A_{\ge m+1},M)_{\ge i}= 0 \quad \text{for all $m \ge d$},
$$
which implies that the canonical map
$
\lext^{j}_{A}(A/A_{\ge m},M)_{\ge i} \xrightarrow{} \lext^{j}_{A}(A/A_{\ge m+1},M)_{\ge i}
$
is an isomorphism for all $m \ge d$. Thus, $H^j_{\m}(M)_{\ge i} = \lext^{j}_{A}(A/A_{\ge d},M)_{\ge i}$, and so, to prove (1), it suffices to show that $\lext^{j}_{A}(A/A_{\ge d},M)_{\ell} =~0$ for $\ell \gg 0$. Since $A/A_{\ge d} \in \D^{\tors}_{\gr}(A)$, and $\D^{\tors}_{\gr}(A)$ is generated by $\k(t)$ for $t \in \Z$ \cite[Lemma 3.1]{brown2023orlovs}, this holds since $A$ satisfies condition $\chi$.

   We now prove (2). Recall that $\RR^j\Gamma(\widetilde{M}) = \underset{d \to \infty}{\on{colim}}  \Ext^j_A(A_{\ge d}, M)$. By Lemma~\ref{inf_internal_hom} and the exactness of filtered colimits, it follows that $\RR^j\Gamma(\widetilde{M}) = 0$ for $j < \inf(M)$. On the other hand, Theorem~\ref{thm: Serre_duality} implies that $\RR^j\Gamma(\widetilde{M}) = \uHom_\k( \underset{d \to \infty}{\on{colim}} \Ext^{-j-1}_A(M_{\ge d}, R), \k)$. Part (2) therefore follows once again from Lemma~\ref{inf_internal_hom} and the exactness of filtered colimits. Part (3) is immediate from (1) and (2). 
\end{proof}

As an application of Theorem~\ref{thm:cd}, we obtain an extension of \cite[Theorem 1.3]{brown2023orlovs}, which is a generalization to the setting of dg-algebras of a theorem of Orlov \cite[Theorem 2.5]{Orlov2009}. More precisely, we deduce:

\begin{thm}
\label{thm:main}
Let $\kk$ be a field and $A$ a dg-algebra as in Setup~\ref{setup}. Suppose $A$ is Gorenstein (Definition~\ref{def:gorenstein}) with Gorenstein parameter $a$, and assume $A$ admits a balanced dualizing dg-module (\Cref{def:balanced_dualizing}). Write $\Dsing(A) \ce \Db(A) / \Perf(A)$, where $\Perf(A)$ denotes the thick subcategory of $\Db(A)$ generated by $A(i)$ for all $i \in \Z$. Let $q : \Db(A) \to \Dsing(A)$ and $\pi : \Db(A) \to \Dqgr(A)$ denote the canonical functors. The objects $\pi A(j) \in \Dqgr(A)$ and $q \k(j) \in \Dsing(A)$ are exceptional for all $j \in \Z$, and we have:
\begin{enumerate}
\item If $a > 0$, then for each $i \in \Z$, there is a fully faithful functor $\Phi_i : \Dsing(A) \to \Dqgr(A)$ and a semiorthogonal decomposition
$
\Dqgr(A) = \langle \pi A(-i - a + 1), \dots, \pi A(-i), \Phi_i\Dsing(A) \rangle.
$
\item If $a < 0$, then for each $i \in \Z$, there is a fully faithful functor $\Psi_i : \Dqgr(A) \to  \Dsing(A)$ and a semiorthogonal decomposition
$
\Dsing(A) = \langle q \kk(-i), \dots, q \kk (-i+a+1), \Psi_i \Dqgr(A)\rangle.
$
\item If $a = 0$, then there is an equivalence $\Dsing(A) \xra{\simeq} \Dqgr(A)$.
\end{enumerate}

\end{thm}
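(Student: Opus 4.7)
The plan is to follow the proof of \cite[Theorem 1.3]{brown2023orlovs} essentially verbatim, since the only place in that proof where the hypothesis ``$A^0$ is Gorenstein or commutative'' enters is via \cite[Proposition 3.6]{brown2023orlovs}, which asserts that $\RR\Gamma_{\ge i}$ restricts to a fully faithful right adjoint $\Dqgr(A) \to \Db(A)_{\ge i}$ of the composition $\Db(A)_{\ge i} \into \Db(A) \xra{\pi} \Dqgr(A)$. The role of that proposition is taken over here by Theorem~\ref{thm:cd}(3), which establishes the same conclusion under the present hypothesis (Gorenstein plus balanced dualizing dg-module). Consequently, the entire machinery of the previous proof transports to this more general setting.

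I would first verify exceptionality. For $\pi A(j) \in \Dqgr(A)$, the $(\pi, \RR\Gamma_{\ge \ell})$ adjunction supplied by Theorem~\ref{thm:cd}(3), after twisting, identifies $\Hom_{\Dqgr(A)}(\pi A(j), \pi A(j)[t])$ with $\RR^t \Gamma(\w{A})$; Serre duality (Theorem~\ref{thm: Serre_duality}) combined with the Gorenstein condition $\RHom_A(\kk, A) \simeq \kk(a)[-n]$ then forces this group to vanish for $t \ne 0$ and to equal $\kk$ for $t = 0$. Exceptionality of $q\kk(j) \in \Dsing(A)$ is a direct consequence of the same Gorenstein condition together with the fact that $q$ annihilates $\Perf(A)$, by the standard Yoneda computation used in \cite{brown2023orlovs}.

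The semiorthogonal decomposition in the case $a > 0$ is built as follows. Let $\mathcal{T}_i := \langle \pi A(-i-a+1), \dots, \pi A(-i) \rangle \subseteq \Dqgr(A)$, and let $\mathcal{P}_{\ge -i+1}(A) \subseteq \Db(A)_{\ge -i+1}$ be the full subcategory of objects $M$ for which the unit $M \to \RR\Gamma_{\ge -i+1}(\pi M)$ is an isomorphism. Using Theorem~\ref{thm:cd}(3), one shows that $\pi$ restricts to an equivalence $\mathcal{P}_{\ge -i+1}(A) \xra{\simeq} \mathcal{T}_i^{\perp}$, while the Gorenstein hypothesis implies that $q$ restricts to an equivalence $\mathcal{P}_{\ge -i+1}(A) \xra{\simeq} \Dsing(A)$ by the graded Buchweitz argument employed in \cite{brown2023orlovs}; composing these equivalences (suitably inverted) yields $\Phi_i$ and the desired decomposition. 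The cases $a < 0$ and $a = 0$ are obtained by the symmetric arguments of \cite[Section 4]{brown2023orlovs}. The main obstacle---assuming one grants the arguments of that paper---is purely bookkeeping: one must verify, case by case, that every appeal to \cite[Proposition 3.6]{brown2023orlovs} in the original proof can be replaced by Theorem~\ref{thm:cd}(3) without invoking any additional property of $A^0$, a check that is mechanical once one confirms that no stronger consequence of the prior hypothesis was ever used.
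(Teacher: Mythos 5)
Your proposal matches the paper's proof exactly: both identify Theorem~\ref{thm:cd}(3) as the substitute for \cite[Proposition 3.6]{brown2023orlovs} and observe that the remainder of the argument in \cite[Theorem 1.3]{brown2023orlovs} carries over verbatim. The extra detail you supply about exceptionality and the construction of the semiorthogonal decompositions is a faithful (if somewhat longer) unpacking of that same transport argument, not a different route.
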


\begin{proof}
    Theorem~\ref{thm:cd}(3) provides an analogue of \cite[Proposition 3.6]{brown2023orlovs} in our setting; with this in hand, the proof of \Cref{thm:main} is exactly the same as that of \cite[Theorem 1.3]{brown2023orlovs}.
\end{proof}

\begin{remark}
A positive answer to Conjecture~\ref{conj:balanced} would allow one to remove from Theorems~\ref{thm:cd} and~\ref{thm:main} the assumption of the existence of a balanced dualizing dg-module.
\end{remark}

\bibliographystyle{amsalpha}
\bibliography{references}
\Addresses
\end{document}